\documentclass[12pt]{amsart} 

\title[Conformal dimension bounds]{Spaces and groups with conformal dimension greater than one}
\author[J. M. Mackay]{John M. Mackay}

\address{Department of Mathematics, University of Michigan, Ann Arbor, MI 48109.}
\curraddr{Department of Mathematics, University of Illinois at Urbana-Champaign, Urbana, IL 61801.}
\email{jmmackay@illinois.edu}

\thanks{This research was partially supported by NSF grant DMS-0701515.}
\subjclass[2000]{Primary 51F99; Secondary 20F67, 30C65.}

\usepackage{amsmath,amsthm,amsfonts,graphicx}

\newtheorem{thm}{Theorem}[section]

\newtheorem{cor}[thm]{Corollary}
\newtheorem{lem}[thm]{Lemma}

\theoremstyle{definition}
\newtheorem{defn}[thm]{Definition}

\DeclareMathOperator{\diam}{diam}
\newcommand{\alp}{\alpha}
\newcommand{\bet}{\beta}
\newcommand{\del}{\delta}
\newcommand{\gam}{\gamma}
\newcommand{\eps}{\epsilon}

\newcommand{\lam}{\lambda}
\newcommand{\sig}{\sigma}
\newcommand{\bdry}{\partial_\infty}

\newcommand{\CH}{\mathcal{H}}
\newcommand{\CC}{\mathcal{C}}
\newcommand{\dimC}{\dim_{\mathcal{C}}}

\newcommand{\CM}{\mathcal{M}}

\newcommand{\ra}{\rightarrow}
\newcommand{\RR}{\mathbb{R}}
\newcommand{\NN}{\mathbb{N}}

\begin{document}

\begin{abstract}
We show that if a complete, doubling metric space
is annularly linearly connected then its conformal dimension
is greater than one, quantitatively.
As a consequence, we answer a question of Bonk and Kleiner:
if the boundary of a one-ended hyperbolic group has
no local cut points, then its conformal dimension is
greater than one.
\end{abstract}

\maketitle

\section{Introduction}
\label{sec-intro}

 A standard quasi-symmetry invariant of a metric space $(X,d)$ is 
 its {\em conformal dimension}, introduced by Pansu~\cite{Pan89a}.
 It is defined as the infimal Hausdorff dimension of all 
 metric spaces quasi-symmetric to $X$, denoted here by
 $\dimC(X)$.

 Conformal dimension is a natural concept to consider 
 since in some sense it measures the metric
 dimension of the best shape of $X$; see \cite{BK05a} for
 discussion and references for this kind of uniformization
 problem.  A key application of the definition (and its original motivation)
 is in the study of the conformal structure of the boundary
 at infinity of a negatively curved space.
 
 Besides the trivial bound given by the topological dimension
 of a metric space, the conformal dimension is often difficult to estimate
 from below.
 In this paper we give such a bound for an interesting class
 of metric spaces.

\begin{thm}\label{thm-main}
 Suppose $(X,d)$ is a complete metric space which is doubling and
 annularly linearly connected.
 Then the conformal dimension
 $\dimC(X)$ is at least $C>1$, where
 $C$ depends only on the the constants associated
 to the two conditions above.
\end{thm}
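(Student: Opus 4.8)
The plan is to reduce the statement to a uniform lower bound on combinatorial moduli of curve families, and then to establish that bound for some exponent $p>1$ using annular linear connectivity together with the doubling condition.

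For each $k\in\NN$ fix a graph approximation $\mathcal{G}_k$ of $X$ at scale $2^{-k}$: its vertices form a maximal $2^{-k}$-separated subset of $X$, and two vertices are joined by an edge when the corresponding $2^{-k}$-balls meet. Doubling makes the valences of the $\mathcal{G}_k$ uniformly bounded. For a curve family $\Gamma$ in $X$ and $p\ge 1$, write $\mathrm{Mod}_p(\Gamma,\mathcal{G}_k)$ for the combinatorial $p$-modulus, the infimum of $\sum_v\rho(v)^p$ over weights $\rho\colon\mathcal{G}_k\ra[0,\infty)$ that are $\Gamma$-admissible, meaning $\sum_{v\in\gamma}\rho(v)\ge 1$ for every $\gamma\in\Gamma$. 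The standard dictionary between conformal dimension and combinatorial modulus (in the spirit of Pansu, Tyson, and Keith--Laakso) reduces Theorem~\ref{thm-main} to the following: there are disjoint non-degenerate continua $E,F\subset X$ with $\mathrm{dist}(E,F)$ comparable to $\min\{\diam E,\diam F\}$, and constants $p>1$ and $c_0>0$ depending only on the doubling and ALC data, such that $\mathrm{Mod}_p(\Gamma(E,F),\mathcal{G}_k)\ge c_0$ for all large $k$, where $\Gamma(E,F)$ is the family of curves in $X$ joining $E$ to $F$. Indeed, combinatorial modulus is quasi-symmetrically stable, while for a space of Hausdorff dimension less than $p$ the $p$-modulus of any family of curves of definite diameter tends to $0$ as the scale shrinks; so a uniform positive lower bound on $\mathrm{Mod}_p(\Gamma(E,F),\mathcal{G}_k)$ forbids $\dimC(X)<p$. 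Since $X$ is connected (which ALC forces) and has positive diameter, suitable $E,F$ exist; after rescaling we may assume $\diam X$, $\mathrm{dist}(E,F)$, $\diam E$, $\diam F$ are all comparable to $1$, and that curves of $\Gamma(E,F)$ may be taken to lie in a fixed bounded region.

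The heart of the proof is the lower bound $\mathrm{Mod}_p(\Gamma(E,F),\mathcal{G}_k)\ge c_0$, and this is where annular linear connectivity enters: ALC prevents $\Gamma(E,F)$ from being ``cut cheaply''. Wherever an admissible weight $\rho$ is small over a large portion of $X$, connecting curves can be rerouted through that portion, and this rerouting can be iterated across scales. I would set this up as a recursion in $k$. Using the ALC condition to join points across an annulus while remaining in a controlled annulus, together with doubling to count the available room, one shows that a connecting curve seen at scale $2^{-k}$ can be replaced at scale $2^{-(k+1)}$ by several connecting curves that are essentially disjoint; iterating produces, at scale $2^{-k}$, a subfamily of $\Gamma(E,F)$ consisting of at least $N_k\ge c\,2^{\eps k}$ curves that are pairwise disjoint (up to controlled overlap), each of combinatorial length $L_k$ comparable to $2^{k}$. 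For such a curve $\gamma$, admissibility and convexity give $\sum_{v\in\gamma}\rho(v)^p\ge L_k^{1-p}$, and summing over the disjoint subfamily,
\[
  \mathrm{Mod}_p(\Gamma(E,F),\mathcal{G}_k)\ \ge\ N_k\,L_k^{1-p}\ \ge\ c\,2^{\eps k}\,2^{(1-p)k},
\]
which stays bounded away from $0$ once $p=1+\eps'$ with $0<\eps'<\eps$. Choosing $p$ this way, and tracking how $\eps$ and $c$ depend on the doubling constant and the ALC constant $\lam$, produces the desired $C=C(\text{doubling},\lam)>1$. One can also bypass literal disjointness and instead run a direct recursive estimate relating $\mathrm{Mod}_p$ at consecutive scales, which is morally the same argument.

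The main obstacle is precisely this recursive construction and the attendant bookkeeping of constants. Each use of the ALC property enlarges a curve by a factor controlled by $\lam$, so naively rerouting at every scale would inflate lengths to $\lam^{k}$, dwarfing $2^{k}$ and destroying the estimate; the recursion must therefore be organized so that the branching gained from ALC and doubling accrues geometrically in the number of curves while their lengths stay comparable to $2^{k}$, and so that this gain ($2^{\eps}$ per scale) strictly beats the modulus loss ($2^{p-1}$ per scale) once $p$ is close enough to $1$. Extracting from annular linear connectivity a clean, scale- and location-uniform ``branching'' statement of this kind — and using completeness to ensure that the curves produced in the limit actually exist in $X$ — is the technical core on which everything rests.
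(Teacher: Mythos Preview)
Your framework---reducing the bound to a uniform positive lower bound on combinatorial $p$-modulus of a curve family for some $p>1$---is a legitimate alternative to the paper's route, which instead builds an explicit Cantor set of uniform quasi-arcs (Theorem~\ref{thm-arcs}) and feeds the resulting curve family and measure into Pansu's lemma (Lemma~\ref{lem-pansu}). The two endgames are close cousins: Pansu's lemma is essentially a packaged modulus estimate, and the $N_k L_k^{1-p}$ bound you write down is morally the discrete form of the same inequality. What the paper's packaging buys is a clean statement about plain conformal dimension; the ``standard dictionary'' you invoke is most naturally a characterization of the \emph{Ahlfors regular} conformal dimension (Keith--Laakso, Bourdon--Kleiner, Carrasco~Piaggio), and since that quantity dominates $\dimC$, a lower bound on it does not immediately give the theorem as stated. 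This is fixable, but it is not free.

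The real issue is that your proposal defers exactly the step that carries all the weight. You correctly isolate the ``technical core'': a scale- and location-uniform branching statement saying that one connecting curve at scale $2^{-k}$ can be replaced by two (or more) essentially disjoint connecting curves at scale $2^{-(k+1)}$, with separation a fixed fraction of the scale. This is precisely what Section~\ref{sec-lotsofarcs} establishes, and it is not a routine consequence of annular linear connectivity and doubling. The paper's mechanism is a Gromov--Hausdorff compactness argument: first a topological unzipping (Lemma~\ref{lem-topsplit}), then a contradiction argument passing to limits of configurations (Lemmas~\ref{lem-metsplit1} and \ref{lem-metsplit2}) to upgrade ``disjoint'' to ``$\delta^\star$-separated'' with $\delta^\star$ depending only on the data. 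Tukia's straightening theorem (Theorem~\ref{thm-tukia}) is essential here, because without turning the intermediate arcs into uniform local quasi-arcs one cannot take limits and retain arcs, nor control how the $\lambda$-factor interacts with the branching. Your own caveat---that naive rerouting inflates lengths by $\lambda^k$---is exactly the failure mode this machinery is designed to avoid.

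So as written, the proposal is a plan rather than a proof: the combinatorial-modulus wrapper does not reduce the difficulty, and the missing branching lemma is the theorem. If you supply a quantitative unzipping statement of the kind above (with a proof), your modulus computation then goes through; but at that point you have essentially reproduced Section~\ref{sec-lotsofarcs}, and applying Pansu's lemma directly (as the paper does) is at least as short as the modulus bookkeeping and avoids the Ahlfors-regular-versus-plain conformal dimension wrinkle.
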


 Recall that a metric space is 
 {\em $N$-doubling} if every ball can be covered using  
 $N$ balls of half the radius.  The annularly linearly
 connected condition is a quantitative analogue of the topological
 conditions of being locally connected and having 
 no local cut points.
 This is made precise in Definition~\ref{defn-annulusllc} and the
 subsequent discussion.
 For now, a good motivating example of
 a space satisfying our hypotheses is the standard square 
 Sierpi\'nski carpet, denoted by $S$.
 
 The original motivation to study such spaces was given by
 a particular application of Theorem~\ref{thm-main}.
 Each Gromov hyperbolic group $G$ has an associated
 boundary at infinity $\bdry G$, a geometric
 object much studied in its relationship to the group structure
 of $G$ (e.g.~\cite{Gro87,Bow98,Kle06}).
 The boundary carries a canonical family of metrics that are
 pairwise quasi-symmetric, and so
 any quasi-symmetry invariant of metric spaces, such as
 conformal dimension, will give
 a quasi-isometry invariant of $G$.
 
 If the boundary of a hyperbolic group is connected and has no local
 cut points, for example if it is 
 homeomorphic to the Sierpi\'nski carpet
 or the Menger curve, then its self-similarity implies that it will satisfy 
 the (a priori stronger) hypotheses of
 Theorem~\ref{thm-main}.
 Thus as a corollary we answer a question
  of Bonk and Kleiner~\cite[Problem 6.2]{BK05a}.

\begin{cor}\label{cor-main}
 Suppose $G$ is a hyperbolic group whose boundary is non-empty, connected and has no local
 cut points.  Then the conformal dimension of $\bdry G$ 
 is strictly greater than one.
\end{cor}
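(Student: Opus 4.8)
The plan is to verify that $\bdry G$, equipped with a fixed visual metric $d$, satisfies the hypotheses of Theorem~\ref{thm-main}; since conformal dimension is a quasi-symmetry invariant and any two visual metrics on $\bdry G$ are quasi-symmetrically equivalent, this yields $\dimC(\bdry G)=\dimC(\bdry G,d)\ge C>1$. Several of the required properties are standard. A hyperbolic group has non-empty connected boundary exactly when it is one-ended, so $G$ is one-ended; the boundary of a one-ended hyperbolic group is compact (hence complete), metrizable, and locally connected, by the resolution of the cut-point conjecture (Bestvina--Mess, Levitt, Bowditch, Swarup); and with respect to a visual metric it is a doubling metric space. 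Thus $(\bdry G,d)$ is complete, doubling, connected, locally connected, and has no local cut points, and the only point that requires genuine work is to show that it is annularly linearly connected in the sense of Definition~\ref{defn-annulusllc}, for some constant $\lambda$ --- which is allowed to depend on $\bdry G$, since Theorem~\ref{thm-main} then supplies the uniform bound $C>1$.

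To establish annular linear connectedness I would exploit the quasi-Möbius self-similarity of $\bdry G$: the group $G$ acts on $\bdry G$ by uniformly quasi-Möbius homeomorphisms, with the action cocompact on the space of distinct triples. A consequence is that $\bdry G$ is \emph{approximately self-similar}: for every $a\in\bdry G$ and every $0<r\le\diam\bdry G$ there is, after composing with a suitable element of $G$, a uniformly quasisymmetric self-map of $\bdry G$ that expands $B(a,r)$ to a set of diameter comparable to $\diam\bdry G$ while contracting $\bdry G\setminus B(a,2r)$ to a set of small diameter; moreover such maps distort ratios of diameters of nested subsets only by a bounded factor.

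The proof of annular linear connectedness is then a limiting argument by contradiction. If it fails, then for each $n$ there are $a_n$, a radius $r_n$, and points $x_n,y_n$ in the annulus about $a_n$ of radii $\sim r_n$ that lie in distinct connected components of the $n$-times-larger annulus about $a_n$. Apply the rescaling self-map attached to the large ball $B(a_n,2nr_n)$; after passing to a subsequence, the images of $a_n$, $x_n$ and $y_n$ converge to a common point $p\in\bdry G$, the image of the small annulus lies in an annulus of radii $\sim 1/n$ about the image of $a_n$, and the image of the large annulus contains a definite-size subset of $\bdry G$ from which only a ball of radius $\eps_n$ about the image of $a_n$ has been removed, where $\eps_n$ is negligible relative to $1/n$. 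Since the rescaling map is a homeomorphism, it would suffice --- in order to contradict the choice of $x_n,y_n$ --- to join the images of $x_n$ and $y_n$ by a continuum lying in that definite-size subset and avoiding the small removed ball. As $\bdry G$ is a Peano continuum it is locally path connected, and as it has no local cut points the punctured neighbourhoods of $p$ are path connected; the aim is to use this topological input to produce the required continuum.

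The main obstacle, and the technical heart of the proof, is precisely this last step: ``no local cut point'' is purely topological and gives only connectedness (hence, via local path connectedness, path connectedness) of punctured neighbourhoods of $p$, whereas the limiting argument needs the joining continuum to respect both the scale $1/n$ of the two points and the much smaller scale $\eps_n$ of the removed ball. Closing this gap forces one to invoke the self-similarity a second time --- for instance by first proving a discretized, $\eps$-net version of annular connectivity that the self-similar structure reduces to a single scale ratio, and only then upgrading the resulting chains of balls to genuine continua using local connectedness; alternatively one argues that a failure of the condition would propagate through the rescaling maps, via the compactness of $\bdry G$, to a limit configuration on which the absence of local cut points can be applied directly. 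Granting this, the standard reductions above and routine quasisymmetry bookkeeping complete the verification of the hypotheses of Theorem~\ref{thm-main}, which then delivers $\dimC(\bdry G)\ge C>1$.
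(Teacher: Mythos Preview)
Your overall strategy---reduce to Theorem~\ref{thm-main} by verifying that $(\bdry G,d)$ is annularly linearly connected, and prove the latter by a self-similarity/contradiction argument---is exactly the paper's approach, and your identification of this step as the only non-standard one is correct. However, your execution of the rescaling step is at the wrong scale, and this is what creates the difficulty you then struggle to resolve. You blow up $B(a_n,2nr_n)$ to unit size, which sends $x_n,y_n$ (at distance $\sim r_n$ from $a_n$) to points at distance $\sim 1/n$ from the image of $a_n$; they collapse to a single point $p$ in the limit, leaving you with the genuinely delicate task of joining two points at scale $1/n$ while avoiding a ball of scale $1/n^2$. The paper instead rescales by the factor $1/r_n$, so that the annulus $A(a_n,r_n,2r_n)$ becomes $A(z_n,1,2)$ and the points $x_n,y_n$ remain at definite separation in the limit.

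The decisive ingredient you are missing is the identification of that limit. The paper takes a pointed Gromov--Hausdorff limit of $(X,\tfrac{1}{r_n}d,z_n)$ (after first disposing of the case $r_n\not\to 0$ by a direct compactness argument) to obtain a weak tangent $(W,d_W,z_\infty)$, and then invokes \cite[Lemma~5.2]{BK02b}: every weak tangent of $\bdry G$ is homeomorphic to $\bdry G\setminus\{p\}$ for some $p$. This immediately gives that $z_\infty$ is not a local cut point of $W$, so the finitely many components of $A(z_\infty,0.9,2.1)$ can be joined by compact connected sets inside some fixed $A(z_\infty,1/M,2M)$, and these lift back to $(X,\tfrac{1}{r_n}d)$ for large $n$, contradicting the choice of $x_n,y_n$. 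Your second proposed fix (``a limit configuration on which the absence of local cut points can be applied directly'') gestures at exactly this, but without the weak-tangent lemma you have no control over what the limit space looks like or why it should inherit the no-local-cut-point property; that lemma is the technical heart you are looking for.
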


 These topological conditions on the boundary of a hyperbolic group correspond to
 algebraic properties of the group itself.
 The boundary is non-empty when the group is infinite.
 Using Stallings' theorem on the ends of a group, 
 one sees that the boundary is connected if and only if
 the group does not split over a finite group~\cite{Sta68}.
 
 More recently, work of Bowditch \cite[Theorem 6.2]{Bow98}
 shows that if $\bdry G$ is connected and not homeomorphic to $S^1$, then 
 $G$ splits over a virtually cyclic subgroup if and only if $\bdry G$
 has a local cut point.
 
 Using these results, we note a more algebraic version of Corollary~\ref{cor-main}.

\begin{cor}\label{cor-main2}
 Suppose $G$ is a one-ended hyperbolic group whose boundary has conformal dimension one.
 Then either the boundary of $G$ is homeomorphic to $S^1$ (and hence $G$ is virtually Fuchsian),
 or $G$ splits over a virtually cyclic subgroup.
\end{cor}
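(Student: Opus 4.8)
The plan is to obtain this statement by combining Corollary~\ref{cor-main} with known structural results about hyperbolic groups, so almost no new argument is required. First I would assemble the standard topological input: since $G$ is one-ended it is in particular infinite, so $\bdry G$ is non-empty; and since a one-ended group does not split over a finite subgroup, Stallings' theorem (as used in the discussion following Corollary~\ref{cor-main}, cf.~\cite{Sta68}) shows that $\bdry G$ is connected. I would also record that the conformal dimension of any connected metric space with more than one point is at least its topological dimension, hence at least one, so the hypothesis $\dimC(\bdry G)=1$ is exactly the borderline value: it is the only case in which the conclusion of Corollary~\ref{cor-main} can fail.

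Next I would apply the contrapositive of Corollary~\ref{cor-main}. That corollary asserts that a non-empty, connected hyperbolic group boundary with no local cut points has conformal dimension strictly greater than one; since $\bdry G$ is non-empty and connected but has $\dimC(\bdry G)=1$, it must therefore have a local cut point.

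Finally I would split into two cases according to whether $\bdry G$ is a circle. If $\bdry G$ is homeomorphic to $S^1$, then the action of $G$ on its boundary is a uniform convergence action on the circle, and by the convergence group theorem (Tukia, Gabai, Casson--Jungreis) $G$ is virtually Fuchsian; this is the first alternative. If instead $\bdry G$ is connected and not homeomorphic to $S^1$, then, since it has a local cut point, Bowditch's theorem \cite[Theorem~6.2]{Bow98} applies and shows that $G$ splits over a virtually cyclic subgroup; this is the second alternative. I do not anticipate a substantive obstacle: all of the analytic difficulty sits in Theorem~\ref{thm-main}, and hence in Corollary~\ref{cor-main}, while the present statement is merely the assembly of that result with Stallings' and Bowditch's theorems. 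The only points needing care are that one-endedness genuinely yields both the non-emptiness and the connectedness of $\bdry G$, that $\dimC=1$ really is the borderline value for Corollary~\ref{cor-main}, and that the circle is the sole exception in Bowditch's dichotomy.
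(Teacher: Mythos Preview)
Your argument is correct and matches the paper's intended proof: the paper does not give a separate proof of this corollary but simply notes that it follows from Corollary~\ref{cor-main} together with the structural results of Stallings and Bowditch recalled just before the statement, exactly as you outline. The only addition you make explicit that the paper leaves implicit is the invocation of the convergence group theorem for the parenthetical ``virtually Fuchsian'' clause, which is appropriate.
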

  
\subsection*{Outline of proof} 
 Let us return to the example of the standard Sierpi\'nski carpet, $S$.
 Since $S$ has topological dimension equal to one, we need to rely on the metric
 structure of $S$ to prove Theorem~\ref{thm-main}.  It is clear that $S$ contains
 an isometrically embedded copy of $C \times [0,1]$, where $C$ equals the standard
 one third Cantor set.  By a lemma of Pansu,
 the conformal dimension of $C \times [0,1]$ equals the
 Hausdorff dimension of $C$ plus one, and so we have that the conformal dimension of
 $S$ is greater than one; see, for example,~\cite[Example 4.3]{Pan89b}.

 In general, we do not have a product structure to exploit.  Nevertheless,
 we construct a family of arcs in our space $X$ akin to the product
 of an interval and a regular Cantor set (of controlled dimension), and then Pansu's
 lemma completes the proof.
 
Let us consider an example of extending a topological statement
to a quantitative metric analogue.  It is well known that a connected,
locally connected, complete metric space $X$ is arc-wise connected.
Less well known is Tukia's analogous metric result (Theorem~\ref{thm-tukia}): a linearly
connected, doubling and complete metric space is connected by
quasi-arcs.  (See Section~\ref{sec-note} for definitions.)

If we now further assume that $X$ has no local cut points
-- as in the situation of Theorem~\ref{thm-main} --
then a topological
argument shows that the product of a Cantor set and the unit interval 
embeds homeomorphically into $X$.  A weaker statement
is that there exists a collection of arcs $\{J_\sig\}$
in $X$ such that, under the topology induced by the Hausdorff metric, 
the set $\{J_\sig\}$ is a topological Cantor set.

We will show a quantitatively controlled analogue of this weaker statement.
First, let $(\CM(X),d_\CH)$ be the (complete) metric space
 consisting of all closed subsets of $X$ with the Hausdorff metric $d_\CH$.
 For each $\sig>0$, we shall denote by $Z_\sig$ a standard Ahlfors regular Cantor set 
 of Hausdorff dimension $\sig$; this is defined precisely in 
 Section~\ref{sec-lotsofarcs}.
  
\begin{thm}\label{thm-arcs}
 For all $L \geq 1$ and $N \geq 1$, there exist $C \geq 1$,
 $\sig > 0$ and $\lam' \geq 1$ such that
 if $X$ is an $L$-annularly linearly connected, $N$-doubling, complete metric 
 space of diameter at least one, then there exists
 a $C$-bi-Lipschitz embedding of $Z_\sig$ into $\CM(X)$, where
 each point in the image is a $\lam'$-quasi-arc of diameter at least $\frac{1}{C}$.
 Moreover, on the image the Hausdorff metric and minimum distance metric
 are comparable with constant $C$.
\end{thm}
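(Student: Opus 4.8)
The plan is to reproduce inside $X$ the family of fibres $\{c\}\times[0,1]$, $c\in Z_\sig$, that one sees inside the product $Z_\sig\times[0,1]$: I will build a tree of quasi-arcs $\{J_w\}$ in $X$ indexed by finite binary words $w$, all joining one fixed pair of endpoints, and then pass to limits over infinite words. Since $\diam(X)\ge 1$, pick $x,y$ with $d(x,y)\ge\frac12$; because $X$ is linearly connected (being $L$-annularly linearly connected), $N$-doubling and complete, Tukia's theorem (Theorem~\ref{thm-tukia}) yields a $\lam$-quasi-arc from $x$ to $y$ with $\lam=\lam(L,N)$, and passing to a subarc of it I get $J_\emptyset$ with $\diam(J_\emptyset)=\frac12$; by bounded turning its endpoints $x',y'$ then satisfy $d(x',y')\ge\frac{1}{2\lam}$. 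Every $J_w$ and every limit $J_\omega$ will retain the endpoints $x',y'$, so all of them will have diameter at least $\frac{1}{2\lam}$.

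The engine is a \emph{splitting operation}. Given a $\lam'$-quasi-arc $J$ and a scale $r$ much smaller than $\diam(J)$, cut $J$ into consecutive subarcs of diameter $\asymp r$; for any such subarc $\eta_i$ not near an endpoint, bounded turning forces $\eta_i$ to stay at distance $\ge r/\lam'$ from every subarc of the decomposition other than its two neighbours (a short detour would have to contain a whole intermediate subarc, of diameter $\ge r$). Let $a$ be the midpoint of such an $\eta_i$, let $p,q$ be the outer endpoints of $\eta_{i-1}\cup\eta_i\cup\eta_{i+1}=:\eta$, so that $p,q$ lie in a fixed annulus about $a$ with radii $\asymp r$, and let $J_-,J_+$ be the two components of $J\setminus(\text{interior of }\eta)$; these avoid $B(a,r/\lam')$. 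The $L$-annular linear connectivity of $X$ supplies an arc $\beta$ from $p$ to $q$ inside $\bar B(a,C_0r)\setminus B(a,r/C_0)$ with $C_0=C_0(L,\lam')$. Taking a genuine subarc $J^1$ from $x'$ to $y'$ of the Peano continuum $J_-\cup\beta\cup J_+$, one checks that $J^1$ (i) coincides with $J^0:=J$ outside $B(a,C_0r)$, (ii) stays at distance $\ge r/C_0$ from $a$, and (iii) is again a $\lam'$-quasi-arc, once $\lam'$ is fixed large enough relative to $L$, by the standard gluing lemma for quasi-arcs. In particular $d_\CH(J^0,J^1)\asymp r$.

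Now fix $\rho=\rho(L)\in(0,1)$ small enough for the estimates below, put $\sig=\log 2/\log(1/\rho)$ and $r_k=\frac{1}{C_0}\rho^k$, and construct the $J_w$ by induction on $|w|$: given $J_w$ with a designated interior point $a_w$, apply the splitting operation at $a_w$ at scale $r_{|w|}$ to get $J_{w0}$ (keeping $a_w$) and $J_{w1}$ (avoiding $B(a_w,r_{|w|}/C_0)$), set $a_{w0}:=a_w$, and let $a_{w1}$ be a point of the new arc at distance $\asymp r_{|w|}/C_0$ from $a_w$. Since $\rho$ is small relative to $C_0$, all splittings below $J_w$ happen in $B(a_w,2C_0r_{|w|})$, and the region under $J_{w0}$ is separated by a definite multiple of $r_{|w|}$ from the region under $J_{w1}$; a routine induction then shows that for infinite words $\omega,\omega'$ agreeing exactly on a prefix $w$, the sequence $(J_{\omega|_n})_n$ is Cauchy in $\CM(X)$ — hence converges, using completeness of $\CM(X)$ and properness of $X$, to a $\lam'$-quasi-arc $J_\omega$ of diameter $\ge\frac{1}{2\lam}$ — and that
\[
 d_\CH(J_\omega,J_{\omega'})\asymp r_{|w|}\asymp\rho^{|w|},
\]
with constants depending only on $L$ and $N$. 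As this is precisely the scaling law of the standard Ahlfors regular Cantor set $Z_\sig$ of dimension $\sig$, the map $\omega\mapsto J_\omega$ is a $C$-bi-Lipschitz embedding $Z_\sig\hookrightarrow\CM(X)$; and since the lower bound above is witnessed near $a_w$, which lies within $\asymp\rho^{|w|}$ of $J_\omega$ and at distance $\asymp\rho^{|w|}$ from $J_{\omega'}$, the minimum distance and the Hausdorff distance between images are comparable.

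The delicate part, and the one I expect to require the most care, is the robustness of this induction: the ratio $\rho$ and the sizes of all the nested balls must be chosen so that (a) splicing in infinitely many arcs at geometrically decreasing scales never lets the quasi-arc constant of the $J_w$ drift upward, and (b) the separation of size $\asymp\rho^k$ produced at level $k$ survives \emph{all} later modifications on \emph{both} of the two branches it separates — these being confined to scales $\le\rho^{k+1}$ and so, once $\rho$ is small relative to $L$, perturbing each branch by at most $\asymp\rho^{k+1}\ll\rho^k$. One must also check that the Hausdorff limit of the $J_{\omega|_n}$ is genuinely a quasi-arc rather than merely a bounded-turning continuum, which is handled by arranging the $J_{\omega|_n}$ to converge as parametrised curves. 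Balancing these requirements, uniformly over all words and scales and with the exposed-subarc selection also under control, is where the constants $L$ and $N$ and the choice of $\rho$ interact.
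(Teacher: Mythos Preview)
Your local-detour construction has a genuine gap. Since each splitting modifies $J_w$ only inside the ball $B(a_w,C_0 r_{|w|})$ and leaves it unchanged outside, the children $J_{w0}$ and $J_{w1}$ coincide on all of $J_w\setminus B(a_w,C_0 r_{|w|})$ --- in particular they share the endpoints $x',y'$. Passing to limits, every $J_\omega$ contains $x'$ and $y'$, so the minimum distance $d(J_\omega,J_{\omega'})=\inf\{d(p,q):p\in J_\omega,\,q\in J_{\omega'}\}$ is \emph{zero} for every pair of distinct infinite words. Your argument that ``the lower bound is witnessed near $a_w$'' only shows that some point of $J_\omega$ is far from $J_{\omega'}$; that is a lower bound on $d_\CH$, not on the minimum distance. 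So the clause ``the Hausdorff metric and minimum distance metric are comparable'' fails outright, and this clause is not decorative: it is exactly what feeds into Pansu's lemma (Lemma~\ref{lem-pansu}) via the bound $\mu_\sig\{J_a:J_a\cap B(x,r)\neq\emptyset\}\le A r^\sig$. In your family every arc meets $B(x',r)$ for every $r>0$, so that measure is identically $1$ and the conformal-dimension bound collapses.

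The paper's proof is engineered precisely around this point. Instead of detouring at a single spot, each step unzips the \emph{entire} arc into two arcs that are everywhere $\del^\star\eps$-separated (Lemma~\ref{lem-metsplit2}); getting a uniform $\del^\star$ is the whole content of the Gromov--Hausdorff compactness arguments in Lemmas~\ref{lem-metsplit1} and~\ref{lem-metsplit2}, built on the topological unzipping of Lemma~\ref{lem-topsplit}. A single application of annular linear connectivity at one point, as you propose, cannot produce global separation. As a secondary matter, the arc $\beta$ you extract from annular connectivity is merely an arc, not a quasi-arc, and there is no ``standard gluing lemma'' that prevents the quasi-arc constant from drifting when such arcs are spliced in at infinitely many scales; the paper handles the analogous issue by re-applying Tukia's theorem (Theorem~\ref{thm-tukia}) after every split.
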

 
So, how do we create such a good collection of arcs?
First, use the topological properties
of the space to split one arc into two arcs and apply Tukia's theorem
 (Theorem~\ref{thm-tukia}) to straighten
these arcs into uniformly local quasi-arcs.  Second, repeat this procedure 
in a controlled way by using the compactness properties of the quasi-arcs and spaces.
This process gives four arcs, then eight, and so on, limiting to a
collection of arcs indexed by a Cantor set.  This process is described in
Sections~\ref{sec-lotsofarcs} and \ref{sec-mainresults}.

 In Section~\ref{sec-mainresults} we use Pansu's lemma to complete the
 proof of Theorem~\ref{thm-main}.
 Corollary~\ref{cor-main} follows from a short dynamical argument
 similar to one given by Bonk and Kleiner in \cite{BK05b}.
 
As a final remark, we emphasize that the work here is to show the existence of
a {\em uniform} lower bound, greater than one, on the Hausdorff dimension
of any quasi-symmetrically equivalent
metric.  Pansu gave examples of hyperbolic groups which do not have this property:
the canonical family of
(quasi-symmetrically equivalent) metrics on the boundary contains metrics whose 
Hausdorff dimension is arbitrarily close to, but not equal to, one.
These groups are the fundamental groups of spaces
obtained by gluing together two closed hyperbolic surfaces
along an embedded geodesic of equal length in each, corresponding
to an amalgamation of the two surface groups along embedded cyclic subgroups.
Of course, the boundaries of such groups contain local cut points.

For more discussion on conformal dimension, we refer the reader
 to the Bonk and Kleiner~\cite{BK05a} and Kleiner~\cite{Kle06}.
 Note that these authors work with the Ahlfors
 regular conformal dimension; since this infimum is taken
 over a more restricted class of spaces, 
 it is bounded below by the conformal dimension, and
 thus our result still applies.

\subsection*{Acknowledgments}
The author gratefully thanks Bruce Kleiner for all his help and advice.
He also thanks the Department of Mathematics
at Yale University for its hospitality 
and Enrico Le Donne for commens on an earlier draft of this article.

\section{Background} \label{sec-note}
\subsection{Quasi-arcs and arc straightening}

Basic analytic definitions and results are contained in~\cite{Hei01}.
Although conformal dimension is defined using quasi-symmetric mappings, we
will primarily use geometric arguments inside metric spaces.  

We will need some notation.  A metric space $(X,d)$
 is said to be {\em $L$-linearly connected} for some $L \geq 1$ if
  for all $x,y \in X$ there
exists a continuum $J \ni x,y$
of diameter less than or equal to $L d(x,y)$.
This is also known as the LLC(1) or BT (bounded turning) condition.
We can actually assume that $J$ is an arc, at the
cost of increasing $L$ by an arbitrarily small amount.

As already mentioned, $X$ is {\em doubling} if there exists a constant $N$ such that every ball
can be covered by at most $N$ balls of half the radius.
Note that a complete, doubling metric space is proper: closed balls are compact.

A key tool in creating the collection of arcs in Theorem~\ref{thm-arcs}
is a result of Tukia that straightens arcs into local quasi-arcs.  Before describing
it we need some language to deal with embedded arcs.  Denote
the sub-arc of an arc $A$ between $x$ and $y$ in $A$ by $A[x,y]$.
We say that an arc $A$ in a doubling and complete metric space is an
{\em $\eps$-local $\lam$-quasi-arc} if
$\mathrm{diam}(A[x,y]) \leq \lam d(x,y)$ for all $x, y \in A$ such that
$d(x,y) \leq \eps$.  If this holds for all $\eps >0$,
then we say $A$ is a {\em $\lam$-quasi-arc}.
The terminology is natural since, by a result
of Tukia and V\"ais\"al\"a \cite{TV80}, such an arc is (locally)
 the image of a quasi-symmetric
embedding of the unit interval.

One non-standard definition will be useful to us:
we say that an arc $B$ {\em $\eps$-follows} an arc $A$ if there
exists a (not necessarily continuous) map $p:B \ra A$ such that 
for all $x,y \in B$, $B[x,y]$ is in the $\eps$-neighborhood of
$A[p(x),p(y)]$; in particular, $p$ displaces points at most $\eps$.

We can now state Tukia's theorem.

\begin{thm}[{\cite[Theorem 1B]{Tuk96}}]\label{thm-tukia}
 Suppose $(X,d)$ is a L-linearly connected,  N-doubling, 
 complete metric space.
 For every arc $A$ in $X$ and every $\eps >0$, there is an arc $J$ in the
 $\eps$-neighborhood of $A$ which $\eps$-follows $A$, has the same endpoints as $A$,
 and is an $\alp\eps$-local $\lam$-quasi-arc,
 where $\lam = \lam(L,N) \geq 1$ and $\alp = \alp(L,N) >0$.
\end{thm}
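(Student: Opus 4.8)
The plan is to build $J$ as a Hausdorff limit of a sequence of arcs $A=A_0,A_1,A_2,\dots$, where $A_{n+1}$ is obtained from $A_n$ by a single ``straightening'' step carried out at the scale $\del_n := c\eps\mu^{-n}$, for a small constant $c=c(L)$ and a large constant $\mu=\mu(L,N)$ fixed at the end. Each step will produce $A_{n+1}$ lying in the $L\del_n$--neighbourhood of $A_n$ and $L\del_n$--following it in a way that respects the linear order along $A_n$, so that the telescoping estimates $\sum_{k\ge n}L\del_k\le C\del_n$ control how far $J$ drifts from each $A_n$; choosing $c$ small then gives $J\subseteq B(A,\eps)$, that $J$ $\eps$--follows $A$, and that $J$ has the same endpoints as $A$. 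The work is in the single step and in showing the limit is a genuine arc with the stated turning estimate.

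The single step, applied to an arc $B=A_n$ at scale $\del=\del_n$, runs as follows. Parametrise $B$ and build a \emph{taut greedy chain} $x_0,x_1,\dots,x_m$: let $x_0$ be the initial point, and having chosen $x_i$ let $x_{i+1}$ be the \emph{last} point of $B$ (in the induced order) with $d(x_i,x_{i+1})\le\del$, stopping when the terminal point is reached. Continuity and injectivity of the parametrisation make the chain finite, and the greedy choice forces $d(x_i,x_{i+1})\le\del$ for each $i$ while $d(x_i,x_j)>\del$ whenever $|i-j|\ge 2$; since the even-indexed and odd-indexed chain points are each $\del$--separated, $N$--doubling bounds the number of chain points inside any ball of radius $R$ by a function of $R/\del$ and $N$ alone. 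Now replace each segment $x_ix_{i+1}$ by an arc $C_i\ni x_i,x_{i+1}$ of diameter at most $L\del$ supplied by $L$--linear connectivity (an arc, not just a continuum, by the remark following the definition of linear connectivity), and let $A_{n+1}$ be an arc from $x_0$ to $x_m$ extracted from $\bigcup_i C_i$ that traverses the pieces $C_i$ in weakly increasing order of index --- concretely, build it inductively, at stage $i$ attaching to the part already constructed the tail of $C_i$ beyond the last point of $C_i$ meeting that part. For such an $A_{n+1}$, a subarc between a point of ``block'' $C_i$ and a point of ``block'' $C_j$ with $i\le j$ lies in $C_i\cup\dots\cup C_j$; since each $C_k$ has diameter $\le L\del$ and contains $x_k\in B[x_i,x_j]$, this yields simultaneously that $A_{n+1}\subseteq B(B,L\del)$ and that $A_{n+1}$ $L\del$--follows $B$ via $z\mapsto x_i$ (for $z$ in block $i$), with the order-respecting property just described.

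It remains to pass to the limit and extract the turning estimate. Using properness (completeness plus doubling) the $A_n$ form a Cauchy sequence in $(\CM(X),d_\CH)$, hence converge to a continuum $J$; a standard compactness/diagonal argument upgrades the order-respecting following maps $A_{n+1}\to A_n$ to an order-respecting map $J\to A_n$ displacing points at most $C\del_n$, and coordinating the arc-parametrisations along the construction shows that $J$ is itself an arc --- this is where one uses that consecutive scales are widely separated, so that the ``diameter mesh'' of the $n$-th partition of $J$ tends to zero, preventing the limit from folding back on itself. For the quasi-arc bound, take $z,z'\in J$ with $r:=d(z,z')\le\alp\eps:=\del_0$ and let $n\ge 1$ be minimal with $\del_n<r$, so $\del_n<r\le\del_{n-1}=\mu\del_n$. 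Projecting $z,z'$ to within $C\del_n$ of points of $A_n=\bigcup_i C_i^{\,n-1}$ and into blocks $C_i^{\,n-1},C_j^{\,n-1}$ of the scale-$\del_{n-1}$ chain, the pair $x_i^{\,n-1},x_j^{\,n-1}$ lies within $C(L,N)\,\del_{n-1}$, so tautness plus doubling give $|i-j|\le M(L,N)$; hence $J[z,z']$ sits in the $C\del_n$--neighbourhood of $C_i^{\,n-1}\cup\dots\cup C_j^{\,n-1}$, a set of diameter $\le(M+1)L\del_{n-1}=(M+1)L\mu\,\del_n\le(M+1)L\mu\,r$. Thus $J$ is an $\alp\eps$--local $\lam$--quasi-arc with $\lam=\lam(L,N)$ and $\alp=\alp(L,N)$, as required.

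The main obstacle is this single step together with the limiting argument: arranging the extracted arc $A_{n+1}$ so that subarcs stay inside consecutive blocks, and --- more delicate --- guaranteeing that the Hausdorff limit of the $A_n$ is an embedded arc rather than some other continuum, with all constants depending only on $L$ and $N$. The taut greedy chain and the widely-separated scales $\del_n$ are precisely what keep the local turning controlled through these limits; doubling enters only through the bound on the number of chain points in a ball, and completeness only through properness.
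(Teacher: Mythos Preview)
The paper does not supply its own proof of this theorem: it is quoted from Tukia, with the remark that the original argument for subsets of $\RR^n$ transfers to this setting via Assouad's embedding theorem, and that a direct intrinsic proof appears in the author's \cite{Mac07a}. Your outline --- iterated discrete straightening at geometrically decreasing scales $\del_n$, Hausdorff limit, turning estimate read off from the block decomposition --- is very much in the spirit of that intrinsic approach, so there is no ``paper's proof'' to differ from.

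There is, however, a genuine gap in your turning estimate. From $d(x_i^{\,n-1},x_j^{\,n-1})\le C(L,N)\,\del_{n-1}$ you infer $|i-j|\le M(L,N)$ by ``tautness plus doubling''. Doubling only bounds the number of $\del$-separated points \emph{lying in a given ball}; it does not bound the number of chain indices between $i$ and $j$ unless you know the intermediate $x_k^{\,n-1}$ also lie in that ball. Nothing in your construction prevents the greedy chain in $A_{n-1}$ from making a long excursion and returning --- think of a spiral whose successive loops are spaced just above $\del_{n-1}$ apart: the greedy chain must go all the way around, so two chain points on adjacent loops are roughly $1.5\,\del_{n-1}$ apart while $|i-j|$ is as large as you like. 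At the first step $A_0=A$ is an arbitrary arc, so this certainly occurs. The standard repair is to carry inductively the statement that each $A_m$ (for $m\ge 1$) is already a $\del_{m-1}$-local $\Lambda$-quasi-arc for a uniform $\Lambda=\Lambda(L,N)$; then closeness of $x_i^{\,n-1}$ and $x_j^{\,n-1}$ forces $\diam A_{n-1}[x_i^{\,n-1},x_j^{\,n-1}]$ to be at most $\Lambda C\,\del_{n-1}$, trapping the intermediate chain points and allowing your doubling count to apply. Setting up and propagating that inductive local-quasi-arc property (and correspondingly taking $\alp\eps$ one scale below $\del_0$) is the substantive missing ingredient, not only the limit-is-an-arc issue you flag.
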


Tukia's original statement concerned subsets of $\RR^n$.  Bonk and Kleiner
\cite[Proposition 3]{BK05b}
used Assouad's embedding theorem to translate it into this language.
For a shorter proof, see \cite[Theorem 1.1]{Mac07a}.

As mentioned in the introduction, this theorem has the 
following independently interesting corollary:

\begin{cor}[Tukia {\cite[Theorem 1A]{Tuk96}}]\label{cor-tukia}
 Every pair of points in a $L$-linearly connected, $N$-doubling,
  complete metric space is
 connected by a $\lam$-quasi-arc, where $\lam = \lam(L,N) \geq 1$.
\end{cor}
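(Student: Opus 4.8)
The plan is to invoke Tukia's arc-straightening theorem (Theorem~\ref{thm-tukia}) at a \emph{single} scale, chosen proportional to the distance between the two given points. The straightened arc it produces is only guaranteed to be a \emph{local} quasi-arc, but because its diameter will be comparable to the chosen scale, the local estimate already covers every pair of points at small distance, while pairs at large distance are controlled trivially by the total diameter.

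Concretely, fix distinct points $x,y \in X$ and set $\del = d(x,y)$. By $L$-linear connectedness -- together with the observation, recorded above, that the connecting continuum may be taken to be an arc at the cost of slightly enlarging $L$ -- there is an arc $A \subseteq X$ with endpoints $x$ and $y$ and $\diam(A) \le (L+1)\del$. Applying Theorem~\ref{thm-tukia} to $A$ with $\eps = \del$ produces an arc $J$ with the same endpoints $x$ and $y$, contained in the $\del$-neighborhood of $A$, that is an $\alp\del$-local $\lam$-quasi-arc, with $\alp = \alp(L,N) > 0$ and $\lam = \lam(L,N) \ge 1$. Since every point of $J$ lies within $\del$ of $A$, we get $\diam(J) \le \diam(A) + 2\del \le (L+3)\del = R\cdot(\alp\del)$, where $R := (L+3)/\alp$ depends only on $L$ and $N$.

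It then remains to check that $J$ is a genuine $\lam'$-quasi-arc with $\lam' := \max(\lam, R)$. Let $p,q \in J$. If $d(p,q) \le \alp\del$, the $\alp\del$-local $\lam$-quasi-arc property gives $\diam(J[p,q]) \le \lam\, d(p,q)$. If instead $d(p,q) > \alp\del$, then $\diam(J[p,q]) \le \diam(J) \le R(\alp\del) < R\, d(p,q)$. In either case $\diam(J[p,q]) \le \lam'\, d(p,q)$, and $\lam' = \lam'(L,N)$, which yields the corollary.

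The one point that needs care -- and which dictates the choice $\eps = \del$ -- is that Theorem~\ref{thm-tukia} on its own gives only a local quasi-arc, and a local quasi-arc need not be a quasi-arc. Taking $\eps$ comparable to $d(x,y)$ forces the straightened arc to have diameter a bounded multiple of its locality scale $\alp\eps$, which collapses the local/global distinction as above. (An alternative, closer in spirit to Theorem~\ref{thm-arcs}, would be to let $\eps \to 0$, use properness of the complete doubling space $X$ to pass to a Hausdorff limit of the resulting arcs, and verify that the limit is a $\lam$-quasi-arc joining $x$ to $y$; the argument above sidesteps this limiting procedure.)
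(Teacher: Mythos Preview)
Your argument is correct. The paper does not actually supply a proof of this corollary: it is stated immediately after Theorem~\ref{thm-tukia} as a consequence, with no further details, and is attributed to Tukia's original paper. Your proof is precisely the natural way to fill in those details---apply Theorem~\ref{thm-tukia} at a scale comparable to $d(x,y)$, and observe that the resulting local quasi-arc has diameter bounded by a fixed multiple of its locality scale, so the local estimate promotes to a global one at the cost of enlarging $\lambda$ to $\lambda' = \max(\lambda,(L+3)/\alpha)$. There is nothing to correct here; your parenthetical alternative (letting $\eps \to 0$ and extracting a Hausdorff limit) would also work but, as you note, is unnecessary.
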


\subsection{Hausdorff distance and Gromov-Hausdorff convergence}

We recall some standard definitions and results (for example, see
\cite[Chapters 7,8]{BBI01}).


Suppose $(X,d)$ is a metric space.  We define
the distance between $x \in X$ and $U \subset X$ as 
\[ d(x,U) = \inf \{ d(x,y) : y \in U\}.\]
The $r$-neighborhood of $U$ is the set $N(U,r) = \{x : d(x,U) < r \}$,
where $r \geq 0$.
The Hausdorff distance $d_\CH$ between $U,V \subset X$ is
\[ d_\CH(U,V) = \inf \{ r  \geq 0 : U \subset N(V,r), V \subset N(U,r) \}.\]

%

We say that a sequence of compact metric spaces $\{X_i\}$, $i \in \NN$, converges to
a metric space $X$ in the Gromov-Hausdorff topology if there exist
 $f_i:X \ra X_i$ and $\eps_i \geq 0$ so that
$f_i$ distorts distance by at most an additive
error of $\eps_i$, $N(f_i(X),\eps_i)$ equals $X_i$, and $\eps_i \ra 0$.
(This is equivalent to the usual definition of
convergence with respect to the Gromov-Hausdorff metric.)

If $X$ is $N$-doubling and $\eps >0$, then $X$ has a finite $\eps$-net
of cardinality at most $C(N,\eps) < \infty$.  Therefore, given any
sequence of $N$-doubling spaces their geometry on scale $\eps$ can
be modelled using uniformly finite sets.  An Arzel\`a-Ascoli argument
gives the following result.  For a proof, see
\cite[Theorem 7.4.15]{BBI01}.

\begin{thm} \label{thm-limgh}
Any sequence of $N$-doubling, complete metric spaces of diameter
at most $D$ has
a subsequence that converges in the Gromov-Hausdorff topology to a
complete metric space of diameter at most $D$.
\end{thm}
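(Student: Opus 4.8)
The plan is to run the standard Arzel\`a--Ascoli / diagonalization argument, using the uniform finiteness of nets recorded just above the statement: for each $\eps>0$ every $N$-doubling space has an $\eps$-net of cardinality at most $C(N,\eps)<\infty$, independently of the space. (One could instead invoke Gromov's precompactness theorem together with completeness of the Gromov--Hausdorff metric on compact spaces, but I will sketch the self-contained argument.) First I would fix the scale sequence $\eps_k = 2^{-k}$ and, for each $i$ and each $k$, choose an $\eps_k$-net $S_i^k \subset X_i$. Replacing $S_i^{k+1}$ by $S_i^{k+1}\cup S_i^k$ and iterating, I may assume the nets are nested, $S_i^1 \subset S_i^2 \subset \cdots$, with $|S_i^k|$ bounded by $M_k := \sum_{j\le k} C(N,\eps_j)$ independently of $i$; padding with repeated points and choosing compatible enumerations, write $S_i^k = \{x_{i,1},\dots,x_{i,M_k}\}$ with the level-$k$ enumeration a prefix of the level-$(k+1)$ one. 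All pairwise distances $d_i(x_{i,a},x_{i,b})$ lie in $[0,D]$, so the array $\big(d_i(x_{i,a},x_{i,b})\big)_{k\in\NN,\ 1\le a,b\le M_k}$ is a point of the compact product $\prod_k [0,D]^{M_k^2}$.

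Next I would pass to a subsequence of $\{X_i\}$ along which this array converges, with coordinate limits $\del^k_{a,b}\in[0,D]$. These satisfy symmetry and the triangle inequality (as limits of quantities that do) and are consistent across levels by the nesting, so they define a pseudometric on the countable index set $\Lambda = \{(k,a): 1\le a\le M_k\}$; passing to the metric quotient and then taking the completion produces a complete metric space $X$, and $\diam X \le D$ since all pairwise distances are $\le D$. It remains to verify Gromov--Hausdorff convergence in the sense of the definition above, i.e.\ to produce maps $f_i : X \to X_i$ with additive distortion $\eps_i \to 0$ and $N(f_i(X),\eps_i)=X_i$. I would build $f_i$ by choosing, for each $x\in X$, a representative at level $k=k(i)$ within $\eps_k$ of $x$ and mapping it to the corresponding point of $S_i^k\subset X_i$. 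Since the level-$k$ distances in $X$ are by construction the limits of the level-$k$ distances in $X_i$, once $i$ is large enough (depending on $k$) $f_i$ distorts these distances by at most $\eps_k$, while $S_i^k$ being an $\eps_k$-net makes $f_i(X)$ $O(\eps_k)$-dense; letting $k(i)\to\infty$ slowly along the subsequence forces the total error $\eps_i\to 0$.

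The routine but fiddly part is the bookkeeping: arranging nested, uniformly finite nets with compatible enumerations, and checking that the limiting data $\del^k_{a,b}$ genuinely assemble into a well-defined pseudometric (non-degeneracy fails in general, which is exactly why one quotients). The one point of real care -- the ``main obstacle'' such as it is -- is matching the argument to the particular formulation of Gromov--Hausdorff convergence used here, which demands approximate isometries \emph{from} the limit \emph{into} the approximating spaces with images nearly onto; this requires defining $f_i$ on all of $X$ rather than only on the dense set coming from the nets, and tracking the two error contributions (the finite level of approximation, and the net scale) simultaneously as $i\to\infty$. Finally, by the remark preceding the theorem each $X_i$ is in fact compact, so no generality is lost in this definition, and the limit $X$, being complete and of diameter at most $D$, is exactly the object asserted.
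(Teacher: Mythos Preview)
Your proposal is correct and follows exactly the approach the paper indicates: the paper does not actually give a proof of this theorem but merely sketches the idea---use the uniformly finite $\eps$-nets coming from the doubling condition and run an Arzel\`a--Ascoli/diagonalization argument---and refers to \cite[Theorem 7.4.15]{BBI01} for details. Your writeup simply carries out that sketch in the specific formulation of Gromov--Hausdorff convergence adopted in the paper.
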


An analogous argument gives results when we consider configurations
of subsets inside $X$.  As a simple example,
consider a sequence of pairs $\{(X_i,A_i)\}$, 
where each $A_i$ is a closed subset of $X_i$.

We say that $(X_i,A_i)$ converges to $(X,A)$ in the Gromov-Hausdorff
topology, where $A$ is a closed subset of $X$,
 if, as before, there exist $f_i:X \ra X_i$ and $\eps_i \geq 0$
so that $f_i$ distorts distances by at most $\eps_i$,
$N(f_i(X),\eps_i) = X_i$, and $\eps_i \ra 0$.  However, we now also require
that $d_\CH(f_i(A),A_i) \leq \eps_i$.  At the cost of doubling $\eps_i$,
we can assume that $f_i(A) \subset A_i$.

A slightly modified version of the proof of Theorem~\ref{thm-limgh} gives
the following:

\begin{thm} \label{thm-limgh-configurations}
	Suppose for each $i \in \NN$, 
	$X_i$ is an $N$-doubling metric space of diameter at most $D$,
	and $A_i$ is an arc in $X_i$.
	Then there is a subsequence of the configurations
	$(X_i,A_i)$ that converges in the Gromov-Hausdorff
	topology to a limit $(X,A)$.
	
	Moreover, if each $A_i$ is a $\lambda$-quasi-arc, then
	$A$ will be a $\lambda$-quasi-arc also; in particular,
	$A$ is an arc.
\end{thm}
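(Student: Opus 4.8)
The plan is to deduce the first assertion from the proof of Theorem~\ref{thm-limgh} by additionally keeping track of a net of the subset $A_i$. Concretely, for each $i$ and each $k\in\NN$ one chooses a finite $\tfrac1k$-net $S_i^k$ of $X_i$ of cardinality bounded by a constant $C(N,D,k)$ independent of $i$, arranged so that $S_i^k\subseteq S_i^{k+1}$ and so that $S_i^k\cap A_i$ is a $\tfrac1k$-net of $A_i$. A diagonal argument then produces a subsequence along which, for every $k$, the finite configurations $(S_i^k,\,S_i^k\cap A_i)$ converge; the limit metric space $X$ is the completion of $\bigcup_k S^k$, the closed set $A$ is the closure of $\bigcup_k T^k$ where $T^k=\lim_i(S_i^k\cap A_i)$, and one checks directly that $(X_i,A_i)\ra(X,A)$. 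These are exactly the routine details behind Theorem~\ref{thm-limgh}, so I would state this part briefly and spend the effort on the ``moreover'' clause, where the quasi-arc hypothesis does real work.

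For that clause, I would first pass to a further subsequence so that $\diam A_i$ converges, say to $\delta_0\in[0,D]$; the case $\delta_0=0$ is degenerate (the limit is a point) and I would dispose of it or exclude it at the outset, so assume $\delta_0>0$ and, after discarding finitely many indices, $\diam A_i\ge\delta_0/2$. It is convenient to realise the convergence inside one proper metric space: there is a proper $Z$ containing isometric copies of all $X_i$ and of $X$ with $d_\CH(X_i,X)\ra0$ and $d_\CH(A_i,A)\ra0$, and I would regard all of these as subsets of $Z$ from now on. I would also record that each $A_i$, being a subset of the $N$-doubling space $X_i$, is $N'$-doubling with $N'=N'(N)$.

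The heart of the matter is to show $A$ is an arc, and for this I would invoke Tukia--V\"ais\"al\"a \cite{TV80}: a doubling metric arc of bounded turning is the image of a quasisymmetric embedding of $[0,1]$, quantitatively, so there are $\eta$-quasisymmetric homeomorphisms $\gamma_i:[0,1]\ra A_i$ with $\eta=\eta(\lam,N)$ independent of $i$. Viewed as maps into $Z$ these are uniformly bounded, and they are equicontinuous: for $s,t\in[0,1]$ one picks $u$ with $|u-s|\ge\tfrac14$ and obtains $d(\gamma_i(s),\gamma_i(t))\le D\,\eta(4|s-t|)$ from the quasisymmetry inequality, a bound independent of $i$ that vanishes as $|s-t|\ra0$. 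Arzel\`a--Ascoli then yields, along a subsequence, uniform convergence $\gamma_i\ra\gamma$ to a continuous $\gamma:[0,1]\ra Z$; uniform convergence forces $d_\CH(\gamma_i([0,1]),\gamma([0,1]))\ra0$ and hence $\gamma([0,1])=A$. Injectivity of $\gamma$ is the crucial point: if $\gamma(s)=\gamma(t)$ with $s\ne t$, choosing $u$ with $d(\gamma(s),\gamma(u))\ge\tfrac12\diam A>0$ and applying the quasisymmetry inequality to the triple $(s,t,u)$ shows $d(\gamma_i(s),\gamma_i(u))\le\eta(|s-u|/|s-t|)\,d(\gamma_i(s),\gamma_i(t))\ra0$, contradicting $d(\gamma_i(s),\gamma_i(u))\ra d(\gamma(s),\gamma(u))>0$. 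So $\gamma$ is a homeomorphism onto $A$.

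Finally I would check the bounded turning constant is again $\lam$. Given $x,y\in A$, pick $x_i,y_i\in A_i$ with $x_i\ra x$, $y_i\ra y$; the subarcs $A_i[x_i,y_i]$ are continua of diameter $\le\lam\,d(x_i,y_i)$ lying in a bounded region of the proper space $Z$, so a subsequence converges in the Hausdorff metric to a subcontinuum $B\subseteq A$ with $x,y\in B$ and $\diam B\le\lam\,d(x,y)$; since $A$ is an arc and a connected subset of an arc containing $x$ and $y$ contains the subarc between them, $A[x,y]\subseteq B$, whence $\diam A[x,y]\le\lam\,d(x,y)$. The main obstacle, and precisely where the hypothesis is used, is the injectivity step: a Hausdorff limit of arcs need not be an arc, and it is the uniform doubling and quasisymmetry control from \cite{TV80} --- together with the diameter lower bound $\diam A_i\ge\delta_0/2$ --- that prevents the parametrizations from collapsing in the limit.
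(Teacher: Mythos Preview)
Your argument is correct. Note, however, that the paper does not actually supply a proof here: before the statement it says only that ``a slightly modified version of the proof of Theorem~\ref{thm-limgh}'' gives the convergence of configurations, and after the statement it says the moreover clause ``follows from an elementary argument using the definitions of convergence and quasi-arcs.'' Your treatment of the first assertion (nets for both $X_i$ and $A_i$, plus a diagonal argument) is exactly the intended slight modification. For the moreover clause your route through \cite{TV80} and Arzel\`a--Ascoli is clean and yields a limiting parametrization for free, but it invokes more machinery than the paper's hint suggests. A more bare-hands alternative, closer in spirit to ``just the definitions'': realise everything inside a proper $Z$, track endpoints $a_i\to a$, $b_i\to b$ (with $a\neq b$ since $\diam A_i\le\lam\,d(a_i,b_i)$), and for any $x\in A$ with approximants $x_i\in A_i$ pass to a subsequence along which $A_i[a_i,x_i]\to L$ and $A_i[x_i,b_i]\to R$; then $L\cup R=A$, while if $w\in L\cap R$ one finds $w_i^L\le x_i\le w_i^R$ in $A_i$ with $d(w_i^L,w_i^R)\to 0$, so $\diam A_i[w_i^L,w_i^R]\le\lam\,d(w_i^L,w_i^R)\to 0$ forces $w=x$. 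Thus every point of $A$ other than $a,b$ is a cut point, and the classical characterisation of arcs as continua with exactly two non-cut points finishes. Your subarc-limit argument for the bounded turning constant is the same in either approach. The trade-off: your parametrization method is reusable (it hands you $\gamma$) and makes injectivity transparent via the quasisymmetry inequality, whereas the cut-point argument avoids the external citation and stays entirely within the notions already set up in the section.
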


This last claim follows from an elementary argument using the definitions
of convergence and quasi-arcs.

\section{Unzipping arcs}
\label{sec-lotsofarcs}

Consider a complete, locally connected metric space with 
 no local cut points, that is, no connected open set is disconnected by
 removing a point.  In such a space it is straightforward
 to ``unzip'' a given arc $A$ into two disjoint arcs $J_1$ and $J_2$ lying in
 a specified neighborhood of $A$.  Repeating this procedure
 to get four arcs, then eight, and so on, it is possible, with some care,
  to get a limiting set
 homeomorphic to the product of a Cantor set and the interval.  
 Such a limit set is
 useless for our purposes because there is no control on the
 minimum distance between two unzipped arcs, and so no way to get a lower
 bound on conformal dimension that is greater than one.
 We will use compactness type arguments to overcome this problem.
 
We begin by proving the topological unzipping result.  
 
\begin{lem}\label{lem-topsplit}
 Given an arc $A$ in a complete, locally connected metric space with
 no local cut points, and $\eps > 0$, it is possible to find two
 disjoint arcs $J_1$ and $J_2$ in $N(A,\eps)$ such that the endpoints
 of $J_i$ are $\eps$-close to the endpoints of $A$.  Furthermore,
 the arcs $J_i$ $\eps$-follow the arc $A$.
\end{lem}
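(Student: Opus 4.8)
The plan is to prove this by induction along $A$, unzipping a little bit at a time and then concatenating. First I would reduce to the case where $A$ has diameter small compared to $\eps$: if $\mathrm{diam}(A) \leq \eps/10$, say, then near one endpoint $x$ of $A$ we can use local connectedness to find a connected open set $U$ with $\diam(U) < \eps$ containing a nondegenerate piece of $A$ near $x$, and since $x$ is not a local cut point of $U$ (after possibly shrinking $U$ to make it connected and small), $U \setminus \{x\}$ is still connected; thus we can find two points $x_1, x_2 \in A$ near $x$, on opposite "sides" in the relevant sense, and an arc in $U \setminus \{x\}$ joining them. This gives a way to route around the endpoint. Doing the same near the other endpoint $y$, and using that a connected, locally connected, complete (hence, on compact pieces, locally compact) metric space is arcwise connected, I can produce two arcs from $x$-side to $y$-side that agree with $A$ except near the endpoints, then perturb them to be disjoint from each other in the interior. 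Actually the cleaner statement for the small-diameter base case: since removing an interior point $z$ of $A$ does not disconnect a small connected neighborhood $V$ of $z$, there is an arc in $V \setminus \{z\}$ joining the two components of $A \setminus \{z\}$, so we can "detour" $A$ around $z$; two generic detours at two generic interior points, chosen on genuinely different routes, give the two disjoint arcs.

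The general case then proceeds by chopping $A$ into consecutive sub-arcs $A = A_1 \cup \dots \cup A_n$, each of diameter at most $\eps/10$, meeting only at consecutive endpoints $p_0, p_1, \dots, p_n$. On each $A_k$ I apply the base case to get two disjoint arcs $J_1^k, J_2^k$ in $N(A_k, \eps/2)$ with endpoints $\eps/2$-close to $p_{k-1}$ and $p_k$. The issue is that the endpoint of $J_1^k$ near $p_k$ need not coincide with the endpoint of $J_1^{k+1}$ near $p_k$. To fix this I would work slightly more carefully: arrange that near each interior node $p_k$ there is a small connected open set $W_k$ of diameter $< \eps/2$ with $W_k \setminus \{p_k\}$ connected, and require the relevant endpoints of $J_i^k$ and $J_i^{k+1}$ to lie in $W_k \setminus \{p_k\}$; then join them inside $W_k \setminus \{p_k\}$ by a short arc, keeping the two families ($i=1$ and $i=2$) disjoint by choosing disjoint connecting arcs (possible since $W_k \setminus \{p_k\}$ is connected, locally connected and we need only finitely many disjoint arcs — a standard fact, or shrink further and use that a point does not locally separate). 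Concatenating gives arcs $J_1, J_2$; a final tidying (removing loops, since a concatenation of arcs that only touch at endpoints contains an arc with the same endpoints) makes them genuine arcs. The $\eps$-following property is then immediate from the construction: define $p : J_i \to A$ by sending the portion of $J_i$ built over $A_k$ to $A_k$ (sending the connecting bits at $p_k$ to $p_k$); for $x, y \in J_i$, the sub-arc $J_i[x,y]$ was built over a union of consecutive $A_k$'s contained between $p(x)$ and $p(y)$ (up to one extra sub-arc at each end), so $J_i[x,y] \subset N(A[p(x),p(y)], \eps)$ provided we took the per-piece constant, the detour sizes, and the $W_k$ sizes all bounded by a small fraction of $\eps$; and $p$ displaces points by at most $\diam(A_k) + \diam(W_{k}) \leq \eps$.

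I would carry out the steps in this order: (1) state and prove the small-diameter base case, getting two disjoint arcs $\eps$-following a short arc with endpoints controlled and, crucially, endpoints landing in prescribed small connected punctured neighborhoods of the original endpoints; (2) choose the subdivision $p_0, \dots, p_n$ of $A$ and the linking neighborhoods $W_k$ at interior nodes; (3) apply the base case on each sub-arc compatibly with the $W_k$; (4) concatenate using disjoint short connecting arcs in each $W_k \setminus \{p_k\}$ and pass to sub-arcs to get honest arcs $J_1, J_2$; (5) verify disjointness (the pieces over different $A_k$ are far apart except near nodes, where disjointness is built in), the endpoint condition, and the $\eps$-following property.

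The main obstacle is step (1) combined with the compatibility bookkeeping in (4): I need the "detour around a point" to be producible with its new endpoints lying in whatever small punctured connected neighborhood I please, so that consecutive pieces can be glued without the gluing arcs either failing to exist or forcing the two families $J_1, J_2$ to intersect. This is exactly where the hypotheses are used — local connectedness to get small connected neighborhoods, and the no-local-cut-point condition to keep them connected after deleting the node point, so that both the detours and the finitely many disjoint connecting arcs exist. Arcwise connectedness of connected, locally connected, complete metric spaces (the topological fact quoted in the introduction) is the tool that turns "connected" into "contains an arc between any two points," and I will invoke it freely.
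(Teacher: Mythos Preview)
Your divide-and-conquer scheme has a real gap, and it appears in both the base case (step~1) and the gluing step (step~4): in each you assert, without argument, that two \emph{disjoint} arcs with prescribed endpoint locations can be found inside a small connected (or punctured connected) neighborhood. The hypothesis ``no local cut points'' tells you that $W_k \setminus \{p_k\}$ is arcwise connected; it does \emph{not} say that two given pairs of points in $W_k \setminus \{p_k\}$ can be joined by disjoint arcs. Already in a punctured Euclidean disk, the pairing $(1,3),(2,4)$ of four cyclically placed points cannot be realized by disjoint arcs, so ``a standard fact'' in step~(4) is simply false. Likewise, ``two generic detours at two generic interior points, chosen on genuinely different routes'' in step~(1) is exactly the heart of the lemma and receives no proof: a detour around one point $z$ gives an arc avoiding $z$, but it may coincide with $A$ everywhere else, and the hypothesis says nothing about removing an entire arc from a neighborhood. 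There is a secondary issue in step~(5): if $A$ doubles back on itself, non-adjacent sub-arcs $A_k$ and $A_m$ may be spatially close, so the pieces $J_i^k$ and $J_i^m$ constructed in overlapping neighborhoods can intersect even away from the nodes.

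The paper sidesteps all of this by building $J_1$ and $J_2$ simultaneously and sequentially along $A$, maintaining at every stage the invariant that they meet in exactly one point $x \in A$. With only one bad point at a time, the no-local-cut-point hypothesis is exactly what is needed: a single arc in $B_0(x,\eps)\setminus\{x\}$ provides a detour that reroutes one of the $J_i$ past $x$ and advances the unzip point strictly along $A$. The nontrivial content is then to show this process cannot stall before reaching the far endpoint, which is done by planting in advance a detour around any putative limit point. Your approach, to be made rigorous, would essentially need this sequential argument already to prove its own base case.
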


\begin{proof}
 Here, $B_0(x,r)$ denotes the connected component of an open
 ball $B(x,r) \subset X$ that contains its center $x$.
 As $X$ is locally connected, $B_0(x,r)$ is always open and connected,
 and, moreover, $B_0(x,r)\setminus \{x\}$ is also open and connected
 because $x$ is not a local cut point.  Any open and connected
 subset of $X$ is arcwise connected.

 Let $a$ and $b$ be the initial
 and final points of $A$ respectively  (in a fixed order given by the topology).
 We are going to define $J_1$ and $J_2$ inductively.
 There exists $w \in B_0(a,\eps) \setminus A$; otherwise,
 there would be a open set in $X$ homeomorphic to an arc segment,
 violating the ``no local cut point'' condition.
 Now join $w$ to $a$ by an arc in $B_0(a, \eps)$.
 Stop this arc at $x$, the first time it meets $A$, and call it 
 $J_1 = J_1[w,x]$.  Set $J_2 = A[a,x]$.
(Perhaps $x=a$, but this is not a problem).
 
 Now we have two head segments for $J_1$ and $J_2$ meeting only at
 $x \in A$, and we want to unzip this configuration further
 along $A$.  This is possible since in $B_0(x, \eps)$ there is 
 a tripod type configuration with two incoming arcs $J_1$ and $J_2$ 
 and one outgoing arc $A[x,b]$.  As noted above, $B_0(x,\eps) \setminus \{x\}$
 is arcwise connected, and so we can find an arc in this set that joins
 some point in $J_1$ (not $x$) to a point in $A[x,b]$ (not $x$).
 The arc may meet $J_1$, $J_2$ and $A[x,b]$ in many places
 but there must be some sub-arc $A'$ joining some point in $J_1$ or $J_2$ to
 some point $y$ in $A$ with interior disjoint from them all.  
 (See Figure~\ref{fig-detour1}, where $A'$ is emphasized.) Use $A'$
 to detour one of $J_1$ and $J_2$ around $x$ to the new unzipping point $y$,
 and extend the other $J_i$ to $y$ using $A[x,y]$.
 
 \begin{figure}
  \begin{center}
  \includegraphics[width=0.7\textwidth]{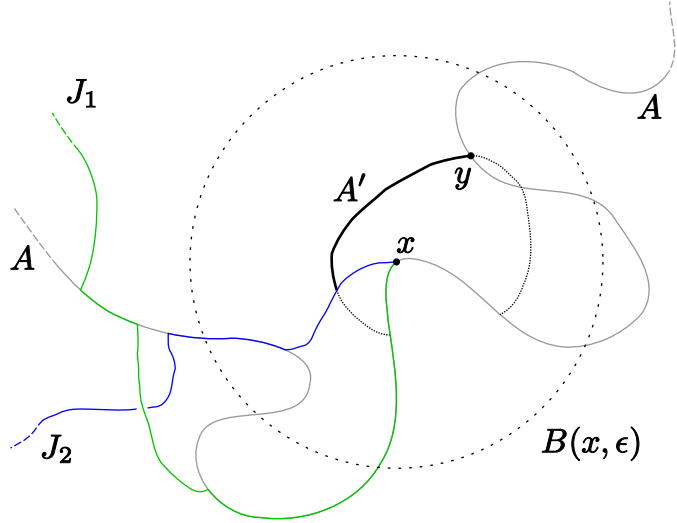}
  \end{center}
  \caption{Unzipping an arc}\label{fig-detour1}
 \end{figure}
 
 What if this unzipping process approaches a limit before we are
 $\eps$-close to the final point $b$ in $A$?  This cannot happen.
 Suppose it is not possible to unzip past $z \in A$.
 Since $B_0(z,\frac{\eps}{4}) \setminus \{z\}$ is arcwise connected,
 inside this set we can construct an arc $A''$ that detours around $z$, from 
 $z_1 \in A$ to $z_2 \in A$, where $z_1 < z < z_2$ in the order on $A$.
 
 Now by the limit point hypothesis, we can unzip $J_1$ and $J_2$ 
 past $z_1$ to $x$, where $z_1 < x < z$.
 To continue the construction of $J_1$ and $J_2$ past $z$, 
 find the arc given by following $z_2$ to $z_1$
 along $A''$, stopping if one of $J_1$ or $J_2$ is met.  If we reach
 $z_1$ without intersecting $J_1$ or $J_2$, as is the case in 
 Figure~\ref{fig-detour2}, then continue to follow $A$ from $z_1$ towards $z$.
 By the construction of $J_1$ and $J_2$, this
 arc will meet $J_1$ or $J_2$ before reaching $z$.
 In either case, this arc can be used as a 
 legitimate detour around $x$ and $z$, contradicting the assumption on $z$.
 Thus it is possible to continue unzipping until $x \in B(b, \frac{\eps}{2})$.
 
 \begin{figure}
  \begin{center}
  \includegraphics[width=0.7\textwidth]{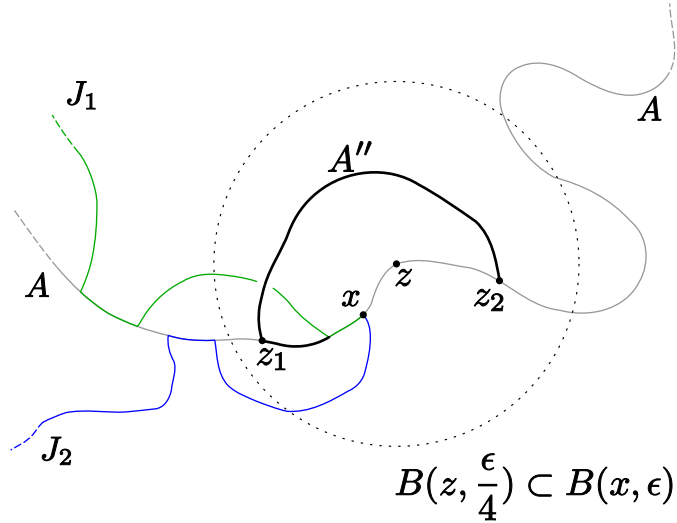}
  \end{center}
  \caption{Avoiding a limit point}\label{fig-detour2}
 \end{figure}
 
 It remains to find labellings $f_i:J_i\ra A$, for $i=1,2$.
 Define $f_i$ to be the identity on $J_i \cap A$.  Each element
 $v$ of $J_i \setminus A$ was created to detour around some point $x \in A$;
 define $f_i(v)$ to equal $x$.
 This labelling coarsely preserves order as desired.
\end{proof}

We would like to give a lower bound for the distance between the two
 split arcs.  To do this we need a quantitative metric version of
 being locally connected with no local cut points.
Let $A(p,r,R)$ be the annulus $\overline{B}(p,R) \setminus B(p,r)$.
 
\begin{defn}\label{defn-annulusllc}
 We say a metric space $X$ is {\em ($L$-)annularly linearly connected} for some
  $L \geq 1$ if whenever $p \in X$, and $x,y \in A(p,r,2r)$ for some $r>0$,
  there exists an arc $J$ joining $x$ to
  $y$ that lies in the annulus $A(p,\frac{r}{L},2L r)$.  Furthermore,
  we assume that $X$ is connected and complete.
\end{defn}

At the cost of replacing $L$ by $8L$, we may assume that such a space is
also $L$-linearly connected.


This condition is stronger than the usual LLC (linearly locally connected)
condition~\cite[Definition 3.12]{HK98}, and is mentioned in~\cite[Remark 3.19]{HK98}.
It is called LLC (linearly locally convex) in~\cite[Section 2]{BMS01}; the authors
of this paper use this condition in the context of spaces that satisfy a Poincar\'e
inequality.
 
The key feature of Definition~\ref{defn-annulusllc} is that, unlike the usual LLC
condition, it preserved under Gromov-Hausdorff convergence.
To be precise,
if $\{X_i\}$ is a sequence of $L$-annularly linearly connected, uniformly doubling, complete metric spaces
 and $X_i \ra X_\infty$ in the Gromov-Hausdorff topology, then $X_\infty$
 is $L'$-annularly linearly connected for any $L'>L$.  (We need to increase $L$ slightly
 to allow ourselves to connect by arcs rather than just continua.)
 Furthermore, annularly linearly connected implies that there are no local cut points.

As a side remark, let us note that 
we do need a stronger condition than no local cut points as a hypothesis for
 Theorem~\ref{thm-main}: it is straightforward to modify the
 Sierpi\'nski carpet construction to get a doubling, linearly connected, complete
 metric space with no local cut points whose {\em Hausdorff} dimension is one.
 Therefore, its conformal dimension is also one.
 
Now for the remainder of this section we will assume that $L$ and $N$ are fixed
 constants, and $\lam \geq 1$, $\alp \in (0,1]$ are
 as given by Theorem~\ref{thm-tukia}.
 Consider the collection $\CC$ of all
 $\lam$-quasi-arcs $A$ in any complete metric space $X$ that is
 $L$-annularly linearly connected and $N$-doubling, and whose endpoints $a$ and $b$ satisfy
 $d(a,b) \in [\frac{1}{R},R]$ for some $R \geq 1$.
 Fix $\eps > 0$, and consider the supremum of 
 possible separations of two arcs split from $A$ by
 the topological lemma above.  Call this $\del_A$ ($\del_A > 0$).
 
\begin{lem}\label{lem-metsplit1}
 There exists $\del^\star = \del^\star(\lam, L, N, \eps, R) > 0$
 such that
 for all $A \in \CC$, $\del_A > \del^\star$.
\end{lem}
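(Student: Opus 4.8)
The plan is to argue by contradiction using the compactness results of Section~\ref{sec-note}. Suppose no such $\del^\star$ exists. Then there is a sequence of $\lam$-quasi-arcs $A_i \in \CC$, lying in $L$-annularly linearly connected, $N$-doubling, complete metric spaces $X_i$, with endpoints at distance in $[\frac1R, R]$, such that $\del_{A_i} \to 0$; that is, for every pair of disjoint arcs $J_1, J_2 \subset N(A_i,\eps)$ produced by Lemma~\ref{lem-topsplit} from $A_i$, the separation $d(J_1, J_2)$ is at most $\del_{A_i} + \tfrac1i$, say. First I would restrict attention to the closed $2\eps$-neighborhood $Y_i = \overline{N}(A_i, 2\eps)$ of $A_i$ inside $X_i$; since $X_i$ is doubling and $A_i$ has diameter at most $\lam R$, each $Y_i$ is $N$-doubling with diameter bounded by a constant $D = D(\lam, R, \eps)$. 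Then I would pass, via Theorem~\ref{thm-limgh-configurations} (applied to the configurations $(Y_i, A_i)$, and, after refining the subsequence, also tracking the two endpoints), to a subsequential Gromov--Hausdorff limit $(Y_\infty, A_\infty)$, where $A_\infty$ is a $\lam$-quasi-arc by the ``moreover'' clause, with endpoints at distance in $[\frac1R, R]$.

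The next step is to check that $Y_\infty$ inherits enough structure to apply Lemma~\ref{lem-topsplit} to $A_\infty$. Here I would use the remark following Definition~\ref{defn-annulusllc}: annular linear connectivity is preserved (up to an arbitrarily small loss in the constant) under Gromov--Hausdorff limits of uniformly doubling complete spaces, so $Y_\infty$ is $L'$-annularly linearly connected for, say, $L' = 2L$ — or at least the relevant local-connectivity-with-no-local-cut-points property holds on the relevant scales; in particular $Y_\infty$ is complete, locally connected and has no local cut points, which is exactly the hypothesis of Lemma~\ref{lem-topsplit}. (One should be slightly careful that the unzipping in Lemma~\ref{lem-topsplit} takes place in the $\eps$-neighborhood of $A_\infty$, which is genuinely contained in the interior of $Y_\infty = \overline{N}(A_\infty,2\eps)$, so no boundary effects interfere.) Apply Lemma~\ref{lem-topsplit} to get disjoint arcs $J_1^\infty, J_2^\infty$ in $N(A_\infty, \eps) \subset Y_\infty$, with endpoints $\eps$-close to those of $A_\infty$ and each $\eps$-following $A_\infty$. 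Since these are two \emph{disjoint compact} sets, they have a strictly positive separation: $d(J_1^\infty, J_2^\infty) = 2\eta > 0$.

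Finally I would transport this configuration back down the sequence. Using the Gromov--Hausdorff approximations $f_i : Y_\infty \to Y_i$ (which distort distances by at most $\eps_i \to 0$), set $J_k^i = f_i(J_k^\infty)$ for $k = 1,2$; these are compact subsets of $N(A_i, 2\eps)$ with $d(J_1^i, J_2^i) \geq 2\eta - 2\eps_i > \eta$ for $i$ large. The one genuinely delicate point — and I expect this to be the main obstacle — is that $f_i(J_k^\infty)$ need not be an arc, nor need it be produced by the topological splitting procedure applied to $A_i$; so I cannot directly conclude $\del_{A_i} \geq \eta$. To fix this, I would run the \emph{proof} of Lemma~\ref{lem-topsplit} on $A_i$ but \emph{guided} by the limiting configuration: at each stage the detour arcs $A', A''$ constructed inside small balls $B_0(x, \tfrac\eps4)\setminus\{x\}$ can be chosen to shadow the corresponding detours in $Y_\infty$ (whose existence and positive separation we have already secured), because for $i$ large the $\tfrac\eps4$-balls in $Y_i$ and $Y_\infty$ are $\eps_i$-close and annular linear connectivity gives the needed connecting arcs uniformly. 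This yields genuine disjoint arcs $J_1^i, J_2^i$ arising from a legitimate unzipping of $A_i$, with $d(J_1^i, J_2^i) \geq \eta/2$, so $\del_{A_i} \geq \eta/2$ for all large $i$, contradicting $\del_{A_i} \to 0$. Taking $\del^\star = \eta/4$ completes the proof; tracing the dependencies, $\eta$ — and hence $\del^\star$ — depends only on $\lam, L, N, \eps, R$, as claimed.
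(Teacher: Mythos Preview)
Your overall architecture --- contradiction, Gromov--Hausdorff compactness of the configurations, topological unzipping in the limit to get two arcs with positive separation $2\eta$, and then transporting back to $X_i$ for large $i$ --- is exactly the paper's strategy. The difference, and the gap, is in the transport step.

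You propose to ``run the proof of Lemma~\ref{lem-topsplit} on $A_i$ but guided by the limiting configuration,'' choosing detour arcs that shadow the corresponding ones in $Y_\infty$. This is not a proof as written, and I do not see how to make it one: the unzipping procedure in Lemma~\ref{lem-topsplit} is not a finite algorithm with a fixed combinatorial structure that can be mirrored step by step. The number and location of the unzipping points $x$, and of the detour arcs $A', A''$, depend on the particular space, so there is no well-defined ``corresponding detour'' in $Y_i$. Even granting some fuzzy correspondence, accumulated drift across many steps could destroy the separation bound.

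The paper's transport is quite different and much more concrete: take the \emph{finished} arcs $J_1, J_2 \subset B_\infty$, discretize each into a chain $D_j$ with $C_i$-sized jumps, push the chains to $D_j' = f_i(D_j) \subset X_i$, and then use $L$-linear connectivity to join consecutive points of $D_j'$ into genuine arcs $J_j'$. Because the connecting pieces have diameter at most $2C_i L \leq \del'/4$, the two arcs $J_1', J_2'$ remain $\del'/4$-separated. This avoids re-running any construction.

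There is also a second point you omit entirely: by definition, $\del_{A_i}$ is a supremum over pairs of arcs that \emph{$\eps$-follow} $A_i$ (this is part of the conclusion of Lemma~\ref{lem-topsplit}), so to contradict $\del_{A_i} \to 0$ you must verify that your transported arcs $\eps$-follow $A_i$, not merely lie in $N(A_i,\eps)$. The paper devotes a separate argument to this, composing the maps $J_j' \to D_j \to A_\infty \to A_i$ and invoking an auxiliary quasi-arc lemma (Lemma~\ref{fact-qa-approx}) to control how $f_i$ carries subarcs of $A_\infty$ to neighborhoods of subarcs of $A_i$. Your proposal does not address this requirement at all.

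A minor remark: restricting to $\overline{N}(A_i,2\eps)$ rather than a large ball makes the claim ``$Y_\infty$ is $L'$-annularly linearly connected'' false as stated (the $Y_i$ themselves need not be annularly linearly connected); you acknowledge this and retreat to ``on the relevant scales,'' which can be made to work, but the paper sidesteps the issue by taking $B_i = B(a_i, 10L(\lam R + \eps))$ so that everything happens well inside the ball.
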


\begin{proof}
 If not, then we can find a sequence of arcs $A_i \subset X_i$ such that
 $\del_{A_i} < \frac{1}{i}$.  Let $a_i$ and $b_i$ denote the endpoints of $A_i$.
 We are only interested
 in what happens inside the ball $B_i := B(a_i, 10L(\lam R+\eps))$.
 As the sequence of configurations $(B_i, A_i, a_i, b_i)$ is
 precompact in the Gromov-Hausdorff topology, by an argument
 similar to Theorem~\ref{thm-limgh-configurations} we can take 
 a subsequence converging to $(B_\infty, A_\infty, a_\infty, b_\infty)$,
 where $A_\infty$ is a $\lam$-quasi-arc inside $B_\infty$ with endpoints
 $a_\infty$ and $b_\infty$.
 
 Convergence here means that there exist constants $C_i \ra 0$ and maps
 $f_i : B_\infty \ra B_i$ such that $f_i$ distorts distances by
 an additive error of at most $C_i$, and every
 point of $B_i$ is within $C_i$ of $f_i(B_\infty)$.  Furthermore,
 $f_i(A_\infty) \subset A_i$, $f_i(a_\infty)=a_i$ and $f_i(b_\infty) = b_i$.

 Since $B_\infty$ will be $L$-annularly linearly connected (away from the edge of the ball),
 it will have no local cut points in its interior.
 Consequently, we can split $A_\infty$ into two arcs $J_1$ and $J_2$ using
 Lemma~\ref{lem-topsplit} inside an $\frac{\eps}{3}$-neighborhood 
 of $A_\infty$.  These arcs are disjoint so they are separated by some
 distance $0 < \del' \leq \frac{\eps}{3}$.
 The remainder of the proof consists of showing that this contradicts
 the assumption on $A_i \subset B_i$ for some large $i$.

 For sufficiently large $i$,
 $C_i \leq \frac{\del'}{8L}$ because $C_i \ra 0$ as $i \ra \infty$.
 For $j=1,\,2$, the arc $J_j$ in $B_\infty$ contains a discrete path $D_j$ with $C_i$-sized
 jumps that corresponds to a discrete path $D_j' = f_i(D_j)$ in $X_i$ with
 $2C_i \leq \frac{\del'}{4L}$ jumps.
 The $L$-linearly connected condition can then be used
 to join each $D_j'$ up into a continuous arc $J_j'$.

 To be precise, if $D_j' = \{p_1,\ldots,p_M\}$, join $p_1$ to $p_2$
 by an arc $J_j'$ of diameter at most $2C_i L \leq \frac{\del'}{4}$.  
 Assume that, at a stage $k$, we have
 an arc $J_j'$ from $p_1$ to $p_k$.  There is an arc $I$ of
 diameter at most $\frac{\del'}{4}$ joining $p_{k+1}$ to $p_k$.
 We extend $J_j'$ to $p_{k+1}$ by following $I$ from $p_{k+1}$ to $p_k$,
 stopping at $x$, the first time it meets $J_j'$, 
 and gluing together $J_j'[p_1,x]$ and $I[x,p_{k+1}]$ to make a new arc $J_j'$,
 and we repeat this until $k = M$.
 Define a map $h_j:J_j' \ra D_j'$ that sends each of the points added at stage $k$
 to the point $p_k$.  Note that for all $x,\,y \in J_j'$, $J_j'[x,y] \subset
 N(D_j'[h_j(x),h_j(y)],\frac{\del'}{4})$; in a coarse sense,
 $J_j'$ $\frac{\del'}{4}$-follows $D_j'$.

 By construction, $J_1'$ and $J_2'$ are $\frac{\del'}{4}$-separated and
 $\eps$-close to $A_i$, but to get a contradiction we need them to
 $\eps$-follow $A_i$.

 Since $A_\infty$ and $A_i$ are both $\lam$-quasi-arcs,
 Lemma~\ref{fact-qa-approx} below implies that
 for all $x$, $y \in A_\infty$, 
 $f_i(A_\infty[x,y])$ is contained in the $((2C_i\lam+C_i)\lam+C_i)$-neighborhood
 of $A_i[f_i(x),f_i(y)]$.
 For each $j$, we can lift the map $h_j:J_j' \ra D_j'$ to
 a map $h_j':J_j' \ra D_j \subset B_\infty$.
 By Lemma~\ref{lem-topsplit}, $D_j$ $\frac{\eps}{3}$-follows $A_\infty$,
 so further compose with the associated map $D_j \ra A_\infty$.
 Finally, compose with $f_i:A_\infty \ra A_i$.

 The composed maps $J_j' \ra D_j \ra A_\infty \ra A_i$, for
 each $j$, show that each
 $J_j'$ follows $A_i$ with constant 
 $\left( \frac{\del'}{4}+C_i+\frac{\eps}{3}+
   (2C_i\lam+C_i)\lam+C_i \right)$.
 This is smaller than $\eps$ for sufficiently large $i$ 
 because $C_i \ra 0$ as $i \ra \infty$.
 We have contradicted our initial assumption, so the proof is complete.
\end{proof}

We used the following lemma in the proof:
\begin{lem}\label{fact-qa-approx}
If $A$ and $A'$ are $\lam$-quasi-arcs, and $f:A \ra A'$ is a map distorting
distances by at most $C$, then for all $x$ and $y$ in $A$,
\[
f(A[x,y]) \subset N( A'[f(x),f(y)], (2C\lam + C)\lam +C).
\]
\end{lem}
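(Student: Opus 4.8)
The plan is to fix a point $z \in A[x,y]$ and bound its distance to the sub-arc $A'[f(x),f(y)]$; the conclusion then follows by ranging over all such $z$. Since $A'$ is an arc, its three marked points $f(x)$, $f(z)$, $f(y)$ occur in a definite order along it. If $f(z)$ lies between $f(x)$ and $f(y)$, then $f(z) \in A'[f(x),f(y)]$ and there is nothing to prove. Otherwise, after possibly interchanging $x$ and $y$, we may assume that $f(x)$ separates $f(z)$ from $f(y)$ along $A'$; in this case I will bound $d(f(z),f(x))$, which suffices since $f(x)$ is an endpoint of $A'[f(x),f(y)]$.

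The argument has two steps. First, note $A[z,y] \subseteq A[x,y]$, and subdivide this sub-arc by points $z = u_0, u_1, \dots, u_n = y$, in order along it, with $d(u_{k-1}, u_k) \le C$ (we may assume $C>0$; the case $C=0$ follows by a limiting argument). Since $f(u_0) = f(z)$ and $f(u_n) = f(y)$ lie on opposite sides of $f(x)$ along $A'$, there is a first index $k$ for which $f(u_k)$ lies on the $f(y)$-side; then $f(x) \in A'[f(u_{k-1}), f(u_k)]$. The $\lambda$-quasi-arc property of $A'$, followed by the distortion estimate, gives $d(f(x), f(u_k)) \le \lambda\, d(f(u_{k-1}), f(u_k)) \le 2\lambda C$. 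Write $v = u_k \in A[z,y]$. Second, the distortion estimate gives $d(x,v) \le 2\lambda C + C$, and since $z \in A[x,y]$ and $v \in A[z,y]$ the point $z$ lies on the sub-arc $A[x,v]$; applying the $\lambda$-quasi-arc property of $A$ yields $d(x,z) \le \lambda\, d(x,v) \le 2\lambda^2 C + \lambda C$. One last application of the distortion estimate gives $d(f(z), f(x)) \le d(x,z) + C \le 2\lambda^2 C + \lambda C + C = (2C\lambda + C)\lambda + C$, as required. The symmetric case, where $f(y)$ separates $f(z)$ from $f(x)$, is handled identically with the roles of $x$ and $y$ exchanged and the sub-arc $A[x,z]$ used in place of $A[z,y]$.

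The only subtlety — and the reason for the finite subdivision above — is that $f$ is merely a map, not assumed continuous, so one cannot simply assert that the image of the connected sub-arc $A[z,y]$ is a connected subset of $A'$ joining $f(z)$ to $f(y)$ and hence passes through $f(x)$. Replacing this by an arbitrarily fine chain, together with the defining property of a quasi-arc (a sub-arc has diameter controlled by the distance between its endpoints), makes the idea rigorous. Everything else is bookkeeping with the additive distortion bound and the two quasi-arc inequalities, and choosing the chain to be $C$-fine lands the estimate exactly on the stated constant.
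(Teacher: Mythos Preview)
Your proof is correct and follows essentially the same strategy as the paper's: subdivide a sub-arc of $A$ into a $C$-fine chain, use the $\lambda$-quasi-arc property of $A'$ at a crossing index to find a chain point whose image is close to $f(x)$ (or $f(y)$), lift via the additive distortion bound, and then apply the $\lambda$-quasi-arc property of $A$ to control $d(x,z)$. The only difference is organizational---you argue pointwise for each $z\in A[x,y]$ using a chain along $A[z,y]$, whereas the paper takes a single chain along all of $A[x,y]$ and treats the two ends and the middle in one pass---but the ingredients and the resulting constant are identical.
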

\begin{proof}
 Let $x = p_0 < p_1 < \cdots < p_n = y$
be a chain of points in $A$ so that the diameter of $A[p_{i-1}, p_i]$ is
less than $C$, for $i = 1, \ldots, n$.

Let $x' = f(x)$, $y' = f(y)$, and $p_i' = f(p_i)$.  Order $A'$ so that
$x' \leq y'$.  Let $l \geq 1$ be the greatest index so that
$p_{l}' \leq x' \leq p_{l+1}'$.  Let $m$, $l \leq m \leq n$, be the
smallest index so that $p_m' \leq y' \leq p_{m+1}'$.
So $p_{l+1}', \ldots, p_m' \in A'[x',y']$.

Since $d(p_i', p_{i+1}') \leq 2C$, we have $d(p_{l+1}', x')$
and $d(p_m', y')$ are both less than or equal to $2C\lam$.
This lifts, by $f$, to give that $d(p_l, x)$ and $d(p_m, y)$ are both
less than or equal to $2C\lam + C$, and so
\[
\diam(A[x,p_{l+1}]) \leq (2C\lam+C)\lam \quad \text{and} \quad
\diam(A[p_m, y]) \leq (2C\lam+C)\lam.
\]

Therefore,
\begin{align*}
 f(A[x,y]) & \subset f(A[x,p_{l+1}] \cup A[p_{l+1}, p_m] \cup A[p_m, y]) \\
 & \subset N(\{x',y'\},(2C\lam+C)\lam+C) 
      \cup N(A'[x',y'], 2C\lam) \\
 & \subset N(A'[f(x),f(y)], (2C\lam+C)\lam+C ). \qedhere
\end{align*}
\end{proof}  

 The important point to note in Lemma~\ref{lem-metsplit1}
  was the presence of the diameter
 constraint $R$ allowing us to use a compactness type technique.  Without
 this constraint we have various problems: our sequence of
 counterexamples still converges in some
 sense, but could give an
 unbounded arc.  Topological unzipping still works but the resulting arcs
 would not necessarily have a positive lower bound on
 separation.
 
 We can deal with the problem of no diameter bounds
 by dividing the problem into two collections of non-interacting smaller
  problems.   To be precise, given a $\lam$-quasi-arc $A$, or even just a
 local $\lam$-quasi-arc, we can use Lemma~\ref{lem-metsplit1} on
 uniformly spaced out 
 small subarcs of $A$ (that are genuine $\lam$-quasi-arcs)
 with a sufficiently small $\eps$ value -- this is the first 
 collection of problems.
 
 Now the second collection of independent problems is how to join
 together two of these small splittings with two disjoint
 arcs having uniform bound on their separation -- but this a problem
 with bounded diameter!  So compactness arguments allow us to fix this
 and to remove
 the dependence of $\del^\star$ on $R$ in Lemma~\ref{lem-metsplit1}.

\begin{lem} \label{lem-metsplit2}
 Given $0 < \eps \leq \mathrm{diam}(X)$ and an $\alp\eps$-local
 $\lam$-quasi-arc $A$ in $X$, where $\alp \in (0,1]$ is a constant,
 there exists $\del^\star = \del^\star(\lam, L, N,\alp) > 0$
 such that for all $\del<\del^\star$ we can split $A$ into
 two arcs that $\eps$-follow $A$ and that are $\del\eps$-separated.
\end{lem}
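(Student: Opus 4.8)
The plan is to reduce Lemma~\ref{lem-metsplit2} to Lemma~\ref{lem-metsplit1} by localizing, exactly along the lines sketched in the paragraphs just before the statement. The key point is that Lemma~\ref{lem-metsplit1} has a spurious dependence on the diameter parameter $R$, and this dependence can be removed by cutting $A$ into short pieces on which Lemma~\ref{lem-metsplit1} applies with a \emph{fixed} choice of $R$ (say $R = 1$, so that the subarcs have endpoints at distance comparable to their diameter), and then stitching the resulting partial splittings together using a second, bounded-diameter compactness argument.

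First I would choose an integer $n = n(\lam,\alp)$ and points $a = x_0, x_1, \ldots, x_n = b$ along $A$ so that each subarc $A_k := A[x_{k-1},x_k]$ has diameter roughly $\frac{1}{n}\diam(A) \leq \alp\eps$; since $A$ is an $\alp\eps$-local $\lam$-quasi-arc this forces $d(x_{k-1},x_k) \in [\frac{1}{\lam n}\diam(A), \frac{1}{n}\diam(A)]$, so after rescaling each $A_k$ is a genuine $\lam$-quasi-arc with endpoints at controlled distance, i.e.\ $A_k \in \CC$ with $R = R(\lam)$ fixed. Applying Lemma~\ref{lem-metsplit1} to each odd-indexed piece $A_1, A_3, \ldots$ with a small parameter $\eps' = \eps'(\eps, n)$ produces, for each such $k$, two disjoint arcs $J_1^k, J_2^k$ that $\eps'$-follow $A_k$, lie in $N(A_k,\eps')$, have endpoints $\eps'$-close to those of $A_k$, and are $\del^\star(\lam,L,N,\eps',R)$-separated. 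Crucially this lower bound on separation, call it $\del_0$, no longer depends on $\diam(X)$ or $\diam(A)$. The even-indexed pieces $A_2, A_4, \ldots$ are left untouched for now.

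Second, I would carry out the joining step. Near each splitting point $x_k$ (for $k$ odd) we have, at each of its two ends, a pair of arc-ends that are $\del_0$-separated and $\eps'$-close to $A$; we must connect the left pair of arc-ends across the short ``bridge'' $A_{k+1}$ (or $A_{k-1}$) to the next pair of arc-ends, keeping the two strands disjoint and keeping a uniform lower bound on their separation. This is a question about a configuration of three short arcs meeting near two points, all of bounded diameter relative to their mutual distances, so a compactness argument identical in structure to the proof of Lemma~\ref{lem-metsplit1} applies: if no uniform separation constant existed, one would extract a Gromov-Hausdorff limit configuration $(B_\infty, A_\infty, \text{two pairs of arc-ends})$, use that $B_\infty$ is $L$-annularly linearly connected (hence has no local cut points in its interior) to join the two pairs by two disjoint arcs $\del'$-separated and $\eps''$-following $A_\infty$, and then transport this back to $B_i$ using the linear connectedness to continuize discrete paths, as in Lemma~\ref{lem-metsplit1} and Lemma~\ref{fact-qa-approx}. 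This yields a separation constant $\del_1 = \del_1(\lam, L, N, \alp)$ for the bridge-joins, again with no diameter dependence.

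Finally I would assemble: setting $\del^\star = \del^\star(\lam,L,N,\alp)$ to be a small multiple of $\min\{\del_0, \del_1\}$ divided by $\eps$ (so that $\del\eps \leq \min\{\del_0,\del_1\}$ for $\del < \del^\star$), and choosing the auxiliary parameters $\eps', \eps''$ small enough that all the accumulated following-errors sum to less than $\eps$, I glue the split pieces $J_i^k$ (odd $k$) to the bridge-joins over $A_{k\pm 1}$ to obtain two arcs $J_1, J_2$ running the length of $A$. By construction they are disjoint, they $\del\eps$-separated (every portion came with separation at least $\del_0$ or $\del_1 \geq \del\eps$), and each $\eps$-follows $A$ via the composition of the local labelling maps $J_i^k \to A_k$ and the bridge labelling maps. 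The main obstacle is the bookkeeping in the joining step: one must verify that the two global strands stay disjoint where a split-piece meets a bridge-join, which requires choosing the bridge-join arcs to lie in small enough neighborhoods of the $x_k$'s (smaller than $\del_0$) so that they cannot accidentally collide with the already-constructed $J_i^k$; and one must check that the following-constant genuinely telescopes rather than accumulating an $n$-dependent factor, which works because the pieces $A_k$ are essentially disjoint along $A$ so the follow-neighborhoods add in parallel, not in series.
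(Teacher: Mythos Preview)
Your overall strategy matches the paper's exactly: cut $A$ into short subarcs that are genuine $\lam$-quasi-arcs, apply Lemma~\ref{lem-metsplit1} on alternate pieces with a fixed $R$, and then join across the remaining bridge pieces via a second Gromov--Hausdorff compactness argument of the same form as Lemma~\ref{lem-metsplit1}.

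There is, however, one essential step missing. The arcs $J_1^k, J_2^k$ produced by Lemma~\ref{lem-metsplit1} are merely arcs, with no uniform quasi-arc control. When you then run the compactness argument for the joining step and pass the configuration $(X^n, A^n, J_1^n, J_1'^n, J_2^n, J_2'^n)$ to a Gromov--Hausdorff limit, the limits of the $J$'s need not be arcs at all---only compact connected sets---so the topological joining in the limit (which begins by attaching the \emph{endpoints} of $J_i$ and $J_i'$ to $A_\infty$ and unzipping) is no longer available. The paper repairs this by inserting Tukia's Theorem~\ref{thm-tukia} immediately after the splitting: straighten each $J_i, J_i'$ into a $\lam'$-quasi-arc at the cost of a small, controlled loss of separation, so that the sequence of configurations is precompact with arc limits. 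The paper flags this explicitly as ``the importance of Tukia's theorem'', and your proposal needs this step. (A minor side issue: your claim $n = n(\lam,\alp)$ is wrong---the number of pieces must grow with $\diam(A)/\eps$; only the piece \emph{size}, the paper's $D_1 = \alp/(5\lam)$ after rescaling to $\eps = 1$, depends solely on $\lam,\alp$. This is harmless since $\del^\star$ is independent of the number of pieces.)
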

\begin{proof}
 Without loss of generality we can rescale to $\eps = 1$.
 As before, choose a linear order on $A$ compatible with its topology.
 Let $x_0$ be the first point in $A$, and $y_0$ be the first point
 at distance $D_1 = \frac{\alp}{5\lam}$ from $x_0$.  
 (If there is no such point, $\mathrm{diam}(A) \leq \frac{1}{5} = \frac{\eps}{5}$ 
  and so we can split $A$ into two points that are $\frac{\eps}{2}$-separated.)
 Label the next point at distance $D_1$ from $y_0$ by $x_1$.  Continue
 in this manner with all jumps $D_1$ until the last label $y_n$,
 with $d(x_n,y_n) \in [D_1,3D_1)$.
 
 Let $D_2 = \frac{1}{4}D_1 = \frac{\alp}{20\lam}$, and
 $D_3 = \frac{1}{2\lam(L\lam+2)}D_2$.  We can control
 the interactions of the collection of sub-arcs of types
 $A[x_i,y_i]$ and $A[y_i, x_{i+1}]$: the $D_3$ neighborhoods of
 two different such sub-arcs are disjoint outside the collection
 of balls $\{B(x_i,D_2)\} \cup \{B(y_i,D_2)\}$.  This is because
 otherwise there are points $z$ and $z'$ in two different sub-arcs
 that satisfy $d(z,z') \leq 2D_3 < \alp$, so the diameter of 
 $A[z,z']$ is less than $2\lam D_3 < \frac{1}{2}D_2$ -- but
 $A[z,z']$ has to pass through the center of a $D_2$-ball that
 does not contain $z$ or $z'$, which is a contradiction.
 
 Now $A[x_i,y_i]$ is a $\lam$-quasi-arc, and we 
 use Lemma~\ref{lem-metsplit1} to create 
 $J_i$ and $J_i'$ in a $\frac{1}{2}D_3$ neighborhood
 of $A[x_i,y_i]$ that are $\frac{1}{2}\del_0$-separated for some
 $\del_0=\del_0(\lam,L,N,D_3)>0$.
 By applying Theorem~\ref{thm-tukia} to straighten the arcs,
 we may assume that they are $\lam'$-quasi-arcs in a $D_3$
 neighborhood of $A[x_i,y_i]$
 that are $\frac{1}{4}\del_0$-separated,
 where $\lam' = \lam'(L,N,\del_0,D_3)$.
 
 We want to join up the pair of arcs $J_i$ and $J_i'$ ending
 in $B(y_i, D_3)$ to the arcs $J_{i+1}$ and $J_{i+1}'$ starting
 in $B(x_{i+1}, D_3)$, without altering the setup outside the
 set $\mathrm{Join}(i) = B(y_i, D_2) \cup N(A[y_i,x_{i+1}],D_3) \cup B(x_{i+1}, D_2)$.
 Figure~\ref{fig-joining} shows this configuration.
 We will do this joining in two stages: first, a topological
 joining that keeps the arcs disjoint,
 and second, a quantitative version that controls the
 separation of the arcs in the joining.
 
 \begin{figure}
  \begin{center}
  \includegraphics[width=0.9\textwidth]{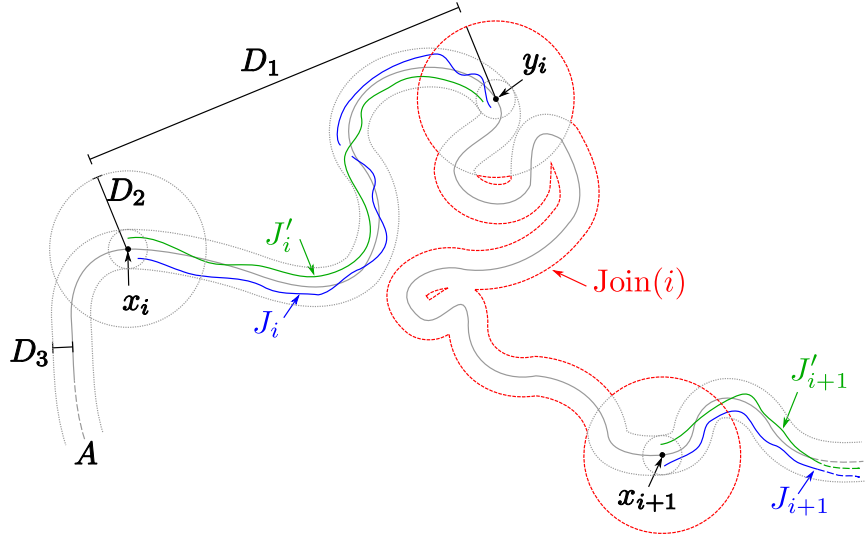}
  \end{center}
  \caption{Joining unzipped arcs}\label{fig-joining}
 \end{figure}

{\em Topological joining:} Join the endpoints of $J_i$ and $J_i'$ to the
 arc $A$ in the ball $B(y_i, L D_3)$ and the endpoints of 
 $J_{i+1}$ and $J_{i+1}'$ to $A$ in the ball $B(x_{i+1}, L D_3)$.
 Use the topological unzipping argument of Lemma~\ref{lem-topsplit}
 to unzip $A$ along
 this segment resulting in `wiring' the pair $(J_i, J_i')$ to the
 pair $(J_{i+1}, J_{i+1}')$ (not necessarily in that order)
 inside $\mathrm{Join}(i)$.  These arcs are disjoint, and so separated
 by some distance $\del>0$.

{\em Quantitative bound on $\del$:} If there is no quantitative
 lower bound on $\del$ then there are configurations (relabeling
 for convenience our joining arcs)
 \[ \CC^n = (X^n, A^n, J_1^n, J_1'^n, J_2^n, J_2'^n), \]
 where the best joining of the pair $J_1^n$ and $J_1'^n$ to the
 pair $J_2^n$, $J_2'^n$ is at most $\frac{1}{n}$-separated.

 But this configuration is precompact in the Gromov-Hausdorff
 topology as the $X^n$ are all $N$-doubling, and the arcs are all
 uniform quasi-arcs.  (This is the importance of Tukia's
 theorem.)  So we can take a subsequence converging to a configuration
 \[ \CC^\infty = (X^\infty, A^\infty, J_1^\infty, J_1'^\infty,
    J_2^\infty, J_2'^\infty) \] in a suitable ball,
 and join the arcs using the topological
 method above, giving some valid rewiring with some positive separation
 $\del^\infty > 0$.  Following the proof of Lemma~\ref{lem-metsplit1}
 we can lift this to $\CC^n$ for sufficiently large $n$
 retaining a separation of $\frac{1}{2}\del^\infty > 0$, which is a contradiction
 for large $n$.
 
 Now since we have some $\del^\star > 0$ to use when joining
 together our wirings in the disjoint collection of all $\mathrm{Join}(i)$,
 we can apply this procedure for all $i$ to create two
 arcs along $A$ that are $\del^\star$-separated, for $\del^\star$
 depending only on $\lam$, $L$, $N$, and $\alp$ as
 desired.  We assumed $\eps = 1$, but rescaling to any $\eps$
 gives the same
 conclusion with our resulting arcs $\del^\star\eps$-separated.
\end{proof}

 
\section{Bounding the conformal dimension from below}
\label{sec-mainresults}

 We now can use the unzipping results of Section~\ref{sec-lotsofarcs}
 (Lemma~\ref{lem-metsplit2}) to create a Cantor set of arcs.
 
 By a Cantor set we mean the space $Z = \{0,1\}^{\NN}$ with an
 \mbox{(ultra-)metric} 
 \[ d_{\sig}((a_1,a_2,\ldots),(b_1,b_2,\ldots)) = 
       \exp(-(\log(2)/\sig) \inf\{n|a_n \neq b_n\}), \]
 where $\sig>0$ is a constant.  (Recall that, by convention, the infimum of the
 empty set is positive infinity.)  The space $(Z, d_{\sig})$ has Hausdorff
 dimension $\sig$, and is Ahlfors regular since there is a Borel probability 
 measure $\nu_\sig $ on $Z$ that satisfies
 $ \frac{1}{2} r^\sig \leq \nu_\sig(B(z,r)) 
   \leq r^\sig$,
 for all $z \in Z$ and $r \leq \mathrm{diam}(Z)$.
 
 Returning to the metric space $(X,d)$
 of Theorem~\ref{thm-main}, we can now prove Theorem~\ref{thm-arcs}.

\begin{proof}[Proof of Theorem~\ref{thm-arcs}]
 Begin with any arc $J'$, assume it has endpoints one unit apart and
 apply Theorem~\ref{thm-tukia} to $J'$ and $\eps = \frac{1}{10}$
 to get $J_{\emptyset}$, a $\lam$-quasi-arc on scales below $\frac{\alp}{10}$.
 Let our scaling factor be 
  $\bet = \frac{\alp\del^\star}{32\lam} \leq \frac{1}{32}$.
 
 We can assume that, for a given $n$, we have a collection of
 $\lam$-quasi-arcs on scales below $\bet^n$, written as
 $\{ J_{a_1 a_2 \ldots a_n} | a_i \in \{0,1\}, 1 \leq i \leq n\}$,
 and that these arcs are $\bet^n$ separated.
 
 For each $J_{a_1 a_2 \ldots a_n}$, we split it into two arcs
 using Lemma~\ref{lem-metsplit2}
 applied to $\eps = \frac{1}{8} \bet^n$,
 then straighten each
 arc using Theorem~\ref{thm-tukia} with 
 $\eps = \frac{\del^\star}{32} \bet^{n}$
 to get two new arcs $J_{a_1 a_2 \ldots a_n 0}$ and 
 $J_{a_1 a_2 \ldots a_n 1}$ that are $\lam$-quasi-arcs on scales
 below 
 $\frac{\alp\del^\star}{32} \bet^n \geq \bet^{n+1}$,
 and are $\frac{\del^\star}{16} \bet^n \geq \bet^{n+1}$
 separated.  In fact, all the arcs created at this stage
 are $\bet^{n+1}$ separated as the new arcs arising from different
 arcs in the previous generation can only get 
 $2\left( \frac{1}{8}\bet^n + \frac{\del^\star}{32}\bet^n \right)
     < \frac{1}{2}\bet^n$
   closer,
 thus remaining at least $\bet^{n+1}$ apart.
 
 At this point it is useful to record the following.
\begin{lem}\label{fact-qa}
 If $J$ is a $\lam$-quasi-arc on scales below $\eps$, and
 we have an arc $J' \subset N(J, \frac{\eps}{4})$, whose endpoints
 are $\frac{\eps}{4}$ close to those of $J$, then we must have
 $J \subset N(J', \lam\eps)$.  In particular,
 $d_\CH(J,J') \leq \lam\eps$.
\end{lem}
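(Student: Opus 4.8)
The plan is to show the two inclusions $J \subset N(J',\lam\eps)$ and (automatically, since the other containment is hypothesized) conclude the Hausdorff bound. The non-trivial direction is the first one; the reverse containment $J' \subset N(J,\frac{\eps}{4}) \subset N(J,\lam\eps)$ is immediate from the hypothesis since $\lam \ge 1$. So fix a point $z \in J$; I want to produce a point of $J'$ within $\lam\eps$ of it. Write $a,b$ for the endpoints of $J$ and $a',b'$ for those of $J'$, ordered compatibly so that $d(a,a') \le \frac{\eps}{4}$ and $d(b,b') \le \frac{\eps}{4}$.

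The first step is to locate, using the inclusion $J' \subset N(J,\frac{\eps}{4})$, a ``projection'' structure: each point of $J'$ lies within $\frac{\eps}{4}$ of some point of $J$, so choose $p: J' \to J$ displacing points by at most $\frac{\eps}{4}$ with $p(a')=a$, $p(b')=b$. Now consider the sub-arc $J[a,z]$ and its complementary sub-arc $J[z,b]$. The key step is to argue that $p(J')$ must ``straddle'' $z$ in the order on $J$: that is, there exist $s,t \in J'$ with $p(s) \in J[a,z]$ and $p(t) \in J[z,b]$, with $s$ and $t$ on opposite sides of some common point — concretely, since $p(a')=a$ lies in $J[a,z]$ and $p(b')=b$ lies in $J[z,b]$, a connectedness/intermediate-value argument along the arc $J'$ produces two consecutive points (or a single point) $s,t \in J'$, adjacent in the chain sense, with $p(s)$ weakly before $z$ and $p(t)$ weakly after $z$ in the order on $J$. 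The point is that we can take $s,t$ as close together as we like along $J'$, hence — here is where the quasi-arc hypothesis on $J$ enters only mildly — actually it is cleaner to use the quasi-arc property of $J$, or rather just continuity: pick a point $w \in J'$ with $p(w)$ ``crossing'' $z$; more carefully, let $w$ be a point of $J'$ realizing $\inf\{ \text{position of } p(w') \ge z \text{ in the order} : w' \in J'\}$, exploiting that $J'$ is compact.

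The main estimate then runs as follows. Having found such a $w \in J'$ with $p(w)$ at or ``just past'' $z$ on one side and an adjacent behavior on the other, I split $J' = J'[a',w] \cup J'[w,b']$ and compare diameters: since $J$ is a $\lam$-quasi-arc on scales below $\eps$ and $d(a,z), d(z,b)$ need not be small, I instead work locally — the real content is that $p(w)$ is close to $z$. Indeed if $w_1, w_2 \in J'$ are the two adjacent chain-points with $p(w_1) \le z \le p(w_2)$ along $J$ and $d(w_1,w_2)$ small, then $d(p(w_1), p(w_2)) \le d(p(w_1),w_1) + d(w_1,w_2) + d(w_2, p(w_2)) \le \frac{\eps}{4} + d(w_1,w_2) + \frac{\eps}{4}$; if this is $\le \eps$ then $\diam J[p(w_1),p(w_2)] \le \lam\eps$, so $d(z, w_1) \le d(z, p(w_1)) + d(p(w_1), w_1) \le \diam J[p(w_1),p(w_2)] + \frac{\eps}{4} \le \lam\eps + \frac{\eps}{4}$. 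To get the clean bound $\lam\eps$ one should instead push $d(w_1,w_2) \to 0$ using compactness of $J'$ and the fact that the crossing can be localized to a single point of $J'$ (a point $w$ with $p$-values approaching $z$ from both sides), so that in the limit $d(p(w), z) = 0$ is replaced by $d(p(w),z) \le \frac{\eps}{4}$ and in fact one gets $d(z, w) \le \diam J[\,\cdot\,] + \frac\eps4$ with the sub-arc having both endpoints within $\frac\eps2 < \eps$, hence diameter $\le \lam \cdot \frac\eps2 \le \lam\eps$, and then $d(z,w) \le \lam\eps$ after absorbing the $\frac\eps4$ (or, more honestly, one gets $\lam\eps + \frac\eps4$ and should have stated the lemma with a slightly larger constant; I would double-check the paper's exact bookkeeping here, as the constant $\lam\eps$ is likely meant up to the harmless slack already built into $\lam \ge 1$). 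Either way, $w \in J'$ witnesses $z \in N(J', \lam\eps)$.

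The main obstacle I anticipate is the crossing argument: making rigorous the claim that the (discontinuous) projection $p$ forces $J'$ to approach $z$, without circular reliance on $J'$ being a quasi-arc. I expect this is handled by a clean topological lemma — the image $p(J')$ meets both $J[a,z]$ and $J[z,b]$, and by taking a convergent sequence of chain-crossing points in the compact arc $J'$ one extracts a single $w \in J'$ whose $p$-image is forced (by the $\frac\eps4$-displacement bound plus the quasi-arc bound on $J$) into the ball $B(z, \frac\eps2)$ or so, giving $d(w,z) \le d(w, p(w)) + d(p(w), z) \le \frac\eps4 + \frac\eps4 = \frac\eps2 \le \lam\eps$. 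Once that point is pinned down the rest is the short triangle-inequality computation above.
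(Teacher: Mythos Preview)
The paper does not actually give a proof of this lemma; it is stated inside the proof of Theorem~\ref{thm-arcs} as an elementary fact and left to the reader. So there is nothing to compare against, and the question is simply whether your argument works.

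Your core idea is correct and is exactly the natural one: choose a nearest-point map $p:J'\to J$ moving points at most $\frac{\eps}{4}$, with $p(a')=a$ and $p(b')=b$; then for any $z\in J$ run a fine chain $a'=w_0,w_1,\ldots,w_n=b'$ along $J'$ and find consecutive $w_i,w_{i+1}$ with $p(w_i)\le z\le p(w_{i+1})$ in the order on $J$ (this is pigeonhole, not an intermediate-value theorem, so the discontinuity of $p$ is no obstacle). Your worry about the constant, however, is misplaced and comes from loose bookkeeping. If the chain has jumps at most $\del$ then
\[
d(p(w_i),p(w_{i+1})) \le \tfrac{\eps}{4}+\del+\tfrac{\eps}{4} = \tfrac{\eps}{2}+\del < \eps
\]
for $\del$ small, so the $\eps$-local $\lam$-quasi-arc condition gives
\[
\diam J[p(w_i),p(w_{i+1})] \le \lam\bigl(\tfrac{\eps}{2}+\del\bigr),
\]
not merely $\lam\eps$ as you wrote. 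Since $z$ lies on this sub-arc,
\[
d(z,w_i) \le \diam J[p(w_i),p(w_{i+1})] + d(p(w_i),w_i) \le \lam\bigl(\tfrac{\eps}{2}+\del\bigr) + \tfrac{\eps}{4}.
\]
Letting $\del\to 0$ and using $\lam\ge 1$ gives $d(z,J') \le \tfrac{\lam\eps}{2}+\tfrac{\eps}{4} \le \tfrac{3\lam\eps}{4} < \lam\eps$. So the stated bound holds with room to spare; there is no need to absorb slack or to suspect the lemma should have been stated with a larger constant. Your final displayed estimate $d(p(w),z)\le \frac{\eps}{4}$ is simply wrong (you have no such control on $p(w)$ directly), but it is also unnecessary; the computation above already finishes the job.
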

 

 Given a sequence $a = (a_1, a_2, \ldots) \in \{0,1\}^\NN$,
 the sequence of arcs $J_{\emptyset},$ $J_{a_1},$ $J_{a_1 a_2}, \ldots$
 is Cauchy in the Hausdorff metric (using Lemma~\ref{fact-qa}),
 and hence convergent to
 $J_{a_1 a_2 \ldots}=J_a$, a set of diameter at least $\frac{1}{2}$.
 A priori, this set need not be an
 arc, but only compact and connected.  (This is actually enough
 to apply the argument of Pansu's lemma.) 
 However, for each $n$ we know that
 $J_{a_1 a_2 \ldots a_n}$ is a $\bet^n$-local $\lam$-quasi-arc
 that $\bet^n$-follows $J_{a_1 a_2 \ldots a_{n-1}}$, and we know
 that $\bet < \min\big\{ \frac{1}{4+2\lam}, \frac{1}{10}\big\}$.  Using these facts,
 \cite[Lemma~2.2]{Mac07a}
 shows that $J_a$ is a $\lam'$-quasi-arc, with 
 $\lam'=\lam'(\bet,L,N)=\lam'(L,N)$, that $\bet^n$-follows
 $J_{a_1 a_2 \ldots a_n}$ for each $n$.

 (Finding quasi-arcs in the limit is not unexpected since on
 each scale the limit set will look like the quasi-arc 
 approximation on the same scale.)
 
 If we set $\CM(X)$ to be the set of all closed sets in $X$,
 we can define a map $F: Z \ra \CM(X)$ by $F(a) = J_a$.
 Let $\mathcal{J} = F(Z)$ be the image of this map and 
 choose the metric $d_{\sig}$ for $Z$, 
  $\sig = \frac{-\log(2)}{\log(\bet)} > 0$.
 It remains to show that $F:(Z,d_\sig) \ra (\CM(X),d_\CH)$
 is a bi-Lipschitz embedding.
 
 Take $a = (a_1, a_2, \ldots), b = (b_1, b_2, \ldots) \in Z$.  Then
 $d_\sig (a, b) \in (\bet^{n+1}, \bet^n]$ if and only if
 $a_i = b_i$ for $1 \leq i < n$ and $a_n \neq b_n$.
 By construction, and a geometric series, 
 $J_a \subset N(J_{a_1 \ldots a_n},\frac{1}{4}\bet^n)$, and so
 as $n$ stage arcs are $\bet^n$ separated,
 we have
\begin{equation}\label{eqn-d-low-est}
 d_\CH(J_a,J_b) \geq d(J_a,J_b) \geq \frac{1}{2}\bet^n 
      \geq \frac{1}{2} d_\sig(a,b).
\end{equation}

 Conversely, applying the triangle inequality and
 Lemma~\ref{fact-qa}, we have
\begin{equation}\label{eqn-d-upp-est}
 d_\CH(J_a,J_b) \leq d_\CH(J_a, J_{a_1 \ldots a_{n-1}})
  + d_\CH(J_{b_1 \ldots b_{n-1}}, J_b)
  \leq 2 \lam \bet^{n-1} \leq \frac{2\lam}{\bet^2} d_\sig(a,b),
\end{equation}
so $F$ is bi-Lipschitz, quantitatively.

 As a final remark, note that there is a natural measure
 $\mu_\sig = F_\ast(\nu_\sig)$ on $\mathcal{J}$.
 The estimates \eqref{eqn-d-low-est} and \eqref{eqn-d-upp-est}
 imply that, for any ball $B(x, r) \subset X$,
 the set $\{ J_a \in \mathcal{J} | J_a \cap B(x,r) \neq \emptyset \}$
 is measurable (in fact open), and if two arcs $J_a$ and $J_b$ both
 meet this ball, we have
 $2r \geq d(J_a, J_b) \geq \frac{1}{2} d_\sig(a,b)$, and so
 \[ \mu_\sig \{ J_a \in \mathcal{J} | J_a \cap B(x,r) \neq \emptyset \}
   \leq 4^\sig r^\sig. \qedhere \]
\end{proof}

We now prove our main theorem.

\begin{proof}[Proof of Theorem~\ref{thm-main}]
The construction of Theorem~\ref{thm-arcs} gives a lower bound for conformal dimension by virtue of
the following lemma of Pansu \cite[Lemma 6.3]{Pan89b}.
  This version is due to Bourdon.

\begin{lem}[{\cite[Lemma 1.6]{Bou95}}]\label{lem-pansu}
 Suppose that $(X,d)$ is a uniformly perfect,
 compact metric space containing a collection
 of arcs $\CC = \{ \gam_i | i \in I\}$ whose diameters 
 are bounded away from zero.  
 Suppose further that we have a Borel probability measure $\mu$
 on $\CC$ and
 constants $A>0$, $\sig \geq 0$ such that, for all balls $B(x,r)$ in $X$, the
 set $\{ \gam \in \CC | \gam \cap B(x,r) \neq \emptyset \}$
 is $\mu$-measurable with measure at most $A r^\sig$.
 Then the conformal dimension of $X$ is at least $1+\frac{\sig}{\tau-\sig}$,
 where $\tau$ is the packing dimension of $X$, and in fact $\tau-\sig \geq 1$.
\end{lem}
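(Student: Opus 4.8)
The plan is to bound from below the Hausdorff dimension of an arbitrary metric space $(X',d')$ that is quasi-symmetric to $X$, via a capacity/modulus argument on the family of arcs. Fix a quasi-symmetric homeomorphism $\eta\colon X\ra X'$; it maps the arc family $\CC$ to an arc family $\CC'$ in $X'$, and pushes $\mu$ forward to a Borel probability measure $\mu'$ on $\CC'$. Let $s=\dimH(X')$; we may assume $s<\infty$, so that for every $t>s$ there is a finite measure $\mathcal{H}^t$-type bound, and in particular there are coverings of $X'$ by balls $\{B(x_j,r_j)\}$ with $\sum_j r_j^{\,t}$ arbitrarily small. The goal is to show $t\geq 1+\frac{\sig}{\tau-\sig}$, equivalently $\frac{\sig}{t-1}+\sig\le\tau$, which will force $s=\dimH(X')\ge 1+\frac{\sig}{\tau-\sig}$ and hence the same for $\dimC(X)$.

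The key steps, in order: (1) \emph{Transfer the measure hypothesis to $X'$.} Show that the quantitative bound ``$\mu\{\gam:\gam\cap B(x,r)\ne\emptyset\}\le Ar^\sig$'' survives under $\eta$ in the following weak form: using quasi-symmetry, comparing balls to the images of balls, and the fact that $X$ is uniformly perfect (so $\eta$ is in fact quasi-symmetric in both directions with control on distortion of ball radii), one obtains that for a ball $B'=B(x',\rho)$ in $X'$, the set $\{\gam'\in\CC':\gam'\cap B'\ne\emptyset\}$ is $\mu'$-measurable with $\mu'$-measure $\lesssim h(\rho)^\sig$, where $h$ is the distortion function controlling the radius of $\eta^{-1}(B')$; one then absorbs $h$ by a volume/packing estimate, using that $\tau$ is the packing dimension of $X$. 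This is the technical heart. (2) \emph{Lower bound on diameters of arcs in $X'$.} Since the $\gam_i$ have diameters bounded below and $X$ is uniformly perfect, quasi-symmetry gives a lower bound on $\diam\eta(\gam_i)$ relative to $\diam X'$; rescaling, assume $\diam\eta(\gam_i)\ge c_0>0$. (3) \emph{Covering/energy estimate.} Take any cover of $X'$ by balls $B(x_j,r_j)$ with $r_j\le c_0/2$. Each arc $\gam'\in\CC'$ has diameter $\ge c_0$, hence meets at least $\sim c_0/r_j$... more precisely, for each $\gam'$, $\sum_{j:\,\gam'\cap B(x_j,r_j)\ne\emptyset} r_j \ge \tfrac13\diam(\gam')\ge \tfrac{c_0}{3}$, because an arc of diameter $\ge c_0$ that is covered by those balls forces the radii of the relevant balls to add up to at least a third of its diameter (connectedness of the arc). (4) \emph{Integrate and apply Fubini.} Integrate the inequality in (3) over $\gam'\in\CC'$ against $\mu'$ and swap the order of summation and integration:
\[
\frac{c_0}{3} \;\le\; \int_{\CC'}\sum_{j:\,\gam'\cap B(x_j,r_j)\ne\emptyset} r_j \,d\mu'(\gam')
 \;=\; \sum_j r_j\,\mu'\{\gam':\gam'\cap B(x_j,r_j)\ne\emptyset\}
 \;\lesssim\; \sum_j r_j\cdot \Phi(r_j),
\]
where $\Phi(r_j)$ is the $X'$-side bound from step (1). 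Choosing the cover so that $\sum_j r_j^{\,t}$ is small and comparing exponents, one reads off that $t$ cannot be too small: precisely, $r_j\Phi(r_j)$ behaves like $r_j^{\,1+\sig'}$ for the appropriate $\sig'$ tied to $\sig$ and the packing dimension $\tau$, and a bounded sum of such terms against a small sum of $r_j^{\,t}$ forces $t\ge 1+\frac{\sig}{\tau-\sig}$. (5) \emph{The side claim $\tau-\sig\ge 1$} follows by applying the same covering/Fubini estimate directly in $X$ (no quasi-symmetry needed): $\sum_j r_j \cdot \mu\{\gam:\gam\cap B(x_j,r_j)\ne\emptyset\}\ge c/3$ with $\mu$-measure $\le A r_j^\sig$ gives $\sum_j r_j^{\,1+\sig}\gtrsim 1$, so $1+\sig\le \dimP(X)=\tau$.

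The main obstacle is step (1): making the measure hypothesis on $\CC$ genuinely quasi-symmetry-invariant. Quasi-symmetric maps distort radii by a nonlinear gauge $\eta$, not by a fixed constant, so the clean bound $Ar^\sig$ does not transfer verbatim; one must either work with the ``relative'' formulation (normalizing by $\diam X$ and using that $X$ is uniformly perfect, which makes $\eta$ a power-type distortion on the relevant scales via a standard quasi-symmetry lemma), or run the modulus argument intrinsically and only use quasi-symmetry to compare Hausdorff measures. Since this is precisely the content of \cite[Lemma 6.3]{Pan89b} / \cite[Lemma 1.6]{Bou95}, I would cite the careful version there; the outline above records the argument so the reader sees why the hypotheses of Theorem~\ref{thm-arcs} (a Cantor set's worth of uniform quasi-arcs, with the ball-meeting measure bound) are exactly what Pansu's lemma consumes.
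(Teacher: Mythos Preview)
The paper does not prove Lemma~\ref{lem-pansu} at all: it is quoted as a black box from Bourdon~\cite[Lemma~1.6]{Bou95} (after Pansu~\cite[Lemma~6.3]{Pan89b}) and then immediately applied. So there is no ``paper's own proof'' to compare against; your proposal is an attempt to reconstruct the cited argument.

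On the substance of your outline: steps~(2), (3), (4) and~(5) are the standard modulus-of-curve-family skeleton and are essentially correct. The diameter lower bound transfers under quasi-symmetry using uniform perfectness; the connectedness estimate $\sum_{j:\gam'\cap B_j\neq\emptyset} r_j \gtrsim \diam(\gam')$ is right; Fubini gives $\sum_j r_j\,\mu\{\gam:\eta(\gam)\cap B_j\neq\emptyset\}\gtrsim 1$; and the side claim $\tau-\sig\geq 1$ does follow from the same estimate run in $X$ itself with balls of a single radius $r\to 0$.

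The genuine gap is step~(1), and your own diagnosis of it is not quite the fix that Bourdon uses. You propose to push the bound $\mu\{\gam:\gam\cap B(x,r)\neq\emptyset\}\le A r^\sig$ forward to $X'$ and obtain something like $\mu'\{\gam':\gam'\cap B(x',\rho)\neq\emptyset\}\lesssim h(\rho)^\sig$, then ``compare exponents''. This cannot be made to work as stated: the quasi-symmetric distortion $h$ depends on location, not just on $\rho$, so there is no single exponent $\sig'$ on the $X'$ side. The actual argument keeps the measure bound in $X$ and the Hausdorff-dimension covering in $X'$, and links them via H\"older's inequality. Concretely, if $B'_j=B(x'_j,r_j)$ is a bounded-overlap cover of $X'$ and $\eta^{-1}(B'_j)\subset B(y_j,s_j)$ in $X$, Fubini gives $\sum_j r_j\, s_j^{\sig}\gtrsim 1$; then H\"older with exponent $t$ yields
\[
1 \;\lesssim\; \Big(\sum_j r_j^{\,t}\Big)^{1/t}\Big(\sum_j s_j^{\,\sig t/(t-1)}\Big)^{(t-1)/t}.
\]
The first factor is controlled by $\dimH(X')<t$, and the second is bounded provided $\sig t/(t-1)\ge \tau$ (using that the $s_j$-balls inherit bounded overlap and that $\tau=\dimP(X)$), which rearranges to $t\ge 1+\sig/(\tau-\sig)$. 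That H\"older step is the missing idea in your outline; once you insert it, the packing dimension $\tau$ enters naturally and step~(1) becomes unnecessary.
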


In our case $X$ may be non-compact, but it is proper and all arcs 
 $\gam \in \CC$ lie in some fixed (compact) ball in $X$.
 The packing dimension of $X$ is finite and bounded from above by
 a constant derived from the doubling constant $N$.
 Furthermore, $X$ is connected, so it is certainly uniformly perfect.
 
 Following Theorem~\ref{thm-arcs},
 we apply Lemma~\ref{lem-pansu} with 
 $\CC = \mathcal{J}$, $\mu = \mu_\sig$ and $A = 4^\sig$,
 where $\sig$ depends only on $L$ and $N$,
 to find a lower bound for the conformal
 dimension of $C=C(L,N) > 1$.
\end{proof}

We now apply our theorem to the case of conformal boundaries of hyperbolic
groups.

\begin{proof}[Proof of Corollary~\ref{cor-main}]
 In \cite[Proposition 4]{BK05b}, Bonk and Kleiner show that $\bdry G$
 with some visual metric $d$ is compact, doubling and linearly connected.  It remains
 to show that $(X,d) = (\bdry G,d)$ is annularly linearly connected,
  but this follows by a proof similar to that of Bonk and Kleiner's proposition.
 
 Suppose $(X,d)$ is not annularly linearly connected.
 Then there is a sequence of annuli $A_n = A(z_n, r_n, 2r_n)$
 containing points $x_n$ and $y_n$ such that there is no arc joining
 $x_n$ to $y_n$ inside $A(z_n, \frac{1}{n}r_n, 2n r_n)$.
 As $X$ is compact we have $r_n \ra 0$; otherwise, there would
 be a subsequence  $n_j \ra \infty$ as $j \ra \infty$
 with $r_{n_j} > \eps >0$ for some $\eps$.  In this case, take
 further subsequences so that $r_{n_j} \ra r_\infty \in
    [\eps, \mathrm{diam}(X)]$,
 $z_{n_j} \ra z_\infty$, $x_{n_j} \ra x_\infty$, and
 $y_{n_j} \ra y_\infty$.  Then a contradiction follows from
 the fact that $z_\infty$ is not a local cut point, so 
 we must have $r_n \ra 0$.
 
 Now we can consider the rescaled sequence $(X, \frac{1}{r_n}d, z_n)$.
 By doubling, this subconverges to a limit $(W, d_W, z_\infty)$ with
 respect to pointed Gromov-Hausdorff convergence.  By
 \cite[Lemma 5.2]{BK02b}, $W$ is homeomorphic
 to $\bdry G \setminus \{p\}$ for some $p$, and so
 $z_\infty$ cannot be a local cut point in $W$.  So we can connect
 the components of $A(z_\infty, 0.9, 2.1)$ in $W \setminus {z_\infty}$
 by finitely many compact sets, and these must lie in some
 $A(z_\infty, 1/M, 2M)$ for $1\leq M < \infty$.  For sufficiently
 large $n$ we can lift these connecting sets to
 $A(z_n, \frac{1}{2M}r_n, 4M r_n)$, contradicting our hypothesis.
 
 In conclusion, $\bdry G$ is annularly linearly connected,
  doubling and complete, 
 and so Theorem~\ref{thm-main} gives that the conformal dimension
 of $\bdry G$ is strictly greater than one.
\end{proof}

\bibliographystyle{amsalpha}
\bibliography{confdim}

\end{document}